\begin{document}
\newcommand\im{\operatorname{Im}}
\newcommand\re{\operatorname{Re}}
\newcommand\Id{\operatorname{Id}}
\newcommand\Real{\mathbb{R}}
\newcommand\RR{\mathbb{R}}
\newcommand\CC{\mathbb{C}}
\newcommand\BB{\mathbb{B}}
\newcommand\Cx{\mathbb{C}}
\newcommand\sphere{\mathbb{S}}
\newcommand\Cinf{{\mathcal C}^{\infty}}
\newcommand\dist{{\mathcal C}^{-\infty}}
\newcommand\CI{C^\infty}
\newcommand\dCI{\dot C^\infty}
\newcommand\Diff{\operatorname{Diff}}
\newcommand\pa{\partial}
\newcommand\supp{\operatorname{supp}}
\newcommand\ep{\epsilon}
\newcommand\bl{{\mathrm b}}
\newcommand{\sH}{\mathsf{H}}
\newcommand\Vf{\mathcal{V}}
\newcommand\Vb{\mathcal{V}_\bl}
\newcommand\Hom{\mathrm{Hom}}
\newcommand\Ker{\mathrm{Ker}}
\newcommand\Ran{\mathrm{Ran}}
\newcommand\Diffb{\mathrm{Diff}_\bl}
\newcommand\Psib{\Psi_\bl}
\newcommand\Psibc{\Psi_{\mathrm{bc}}}
\newcommand\Tb{{}^{\bl} T}
\newcommand\Sb{{}^{\bl} S}

\newcommand\scl{{\mathrm{sc}}}
\newcommand\Tsc{{}^{\scl} T}
\newcommand\Vsc{\mathcal{V}_\scl}
\newcommand\Diffsc{\mathrm{Diff}_\scl}
\newcommand\Hsc{H_\scl}

\newcommand\cL{\mathcal L}

\renewcommand{\Im}{\operatorname{Im}}
\renewcommand{\Re}{\operatorname{Re}}

\newcommand{\mar}[1]{{\marginpar{\sffamily{\scriptsize #1}}}}
\newcommand\av[1]{\mar{AV:#1}}
\newcommand\jw[1]{\mar{JW:#1}}

\newcommand{\abs}[1]{{\left\lvert{#1}\right\rvert}}
\newcommand{\norm}[1]{{\left\lVert{#1}\right\rVert}}
\newcommand{\ang}[1]{{\left\langle{#1}\right\rangle}}

\newcommand{\g}{\mathsf{g}}
\newcommand{\A}{{\mathcal{A}}}
\newcommand{\B}{{\mathcal{B}}}

\newcommand{\tpsi}{\tilde{\psi}}

\setcounter{secnumdepth}{3}
\newtheorem{lemma}{Lemma}[section]
\newtheorem{prop}[lemma]{Proposition}
\newtheorem{thm}[lemma]{Theorem}
\newtheorem{cor}[lemma]{Corollary}
\newtheorem{result}[lemma]{Result}
\newtheorem*{thm*}{Theorem}
\newtheorem*{prop*}{Proposition}
\newtheorem*{cor*}{Corollary}
\newtheorem*{conj*}{Conjecture}
\numberwithin{equation}{section}
\theoremstyle{remark}
\newtheorem{rem}[lemma]{Remark}
\newtheorem*{rem*}{Remark}
\theoremstyle{definition}
\newtheorem{Def}[lemma]{Definition}
\newtheorem*{Def*}{Definition}
\renewcommand{\theenumi}{\roman{enumi}}
\renewcommand{\labelenumi}{(\theenumi)}

\title{Positive commutators at the bottom of the spectrum}
\author[Andras Vasy]{Andr\'as Vasy}
\author{Jared Wunsch}
\address{Department of Mathematics, Stanford University}
\address{Department of Mathematics, Northwestern University}
\email{andras@math.stanford.edu}
\email{jwunsch@math.northwestern.edu}
\thanks{The authors thank Rafe Mazzeo for helpful conversations and
  pointing out the reference \cite{Mazzeo-McOwen}.  They gratefully
  acknowledge partial support from the NSF under grant numbers  
  DMS-0801226 (AV) and DMS-0700318 (JW)}
\date{\today}

\begin{abstract}
Bony and H\"afner have recently obtained positive commutator estimates on the Laplacian in
the low-energy limit on asymptotically Euclidean spaces; these
estimates can be used to prove local energy decay estimates if the
metric is non-trapping.  We simplify the proof of the estimates of
Bony-H\"afner and
generalize them to the setting of scattering manifolds (i.e.\
manifolds with large conic ends), by applying a sharp Poincar\'e
inequality.  Our main result is the positive commutator estimate
$$
\chi_I(H^2\Delta_g)\frac{i}{2}[H^2\Delta_g,A]\chi_I(H^2\Delta_g)
\geq C\chi_I(H^2\Delta_g)^2,
$$
where $H\uparrow \infty$ is a \emph{large} parameter, $I$ is a compact
interval in $(0,\infty),$ and $\chi_I$ its indicator function, and
where $A$ is a differential operator supported outside a compact set
and equal to
$(1/2)(r D_r +(r D_r)^*)$ near infinity.  The Laplacian can also be
modified by the addition of a positive potential of sufficiently
rapid decay---the same estimate then holds for the resulting
Schr\"odinger operator.
\end{abstract}

\maketitle

\section{Introduction}
The purpose of this paper is to clarify an intricate argument recently
introduced by Bony and H\"afner \cite{Bony-Haefner1} and use these ideas to
generalize certain of the results of \cite{Bony-Haefner1}.  The
central thrust of \cite{Bony-Haefner1} is first of all to obtain
certain kinds of commutator estimates for the the Laplacian and its
square root on asymptotically Euclidean space.  The authors then
employ those estimates to yield energy decay results for the wave
equation, and, ultimately, global existence results for quadratically
semilinear wave equations on these spaces.  In a subsequent note
\cite{Bony-Haefner2}, applications of the linear results to the low
frequency limiting absorption principle were shown.  The novel tool
central to all of these applications is the commutator estimate
\begin{equation}\label{BHest}
\chi_I(H^2\Delta_g)\frac{i}{2}[H^2\Delta_g,A]\chi_I(H^2\Delta_g)
\geq C\chi_I(H^2\Delta_g)^2,
\end{equation}
where $H\uparrow \infty$ is a \emph{large} parameter, $I$ is a compact
interval in $(0,\infty),$ and $\chi_I$ its indicator function, and
where $A$ is a differential operator supported outside a compact set
and equal to
$(1/2)(r D_r +(r D_r)^*)$ near infinity.  The estimate
\eqref{BHest} is thus a low-energy version of the positive commutator
construction that is ubiquitous in scattering theory; we remark that
the analogous \emph{high}-energy estimate would not be true with this
choice of $A,$ supported outside a compact set: by standard results in
microlocal analysis, the symbol of $A$ would have to be strictly
increasing along all geodesics, lifted to the cotangent bundle.
Indeed, on a manifold with trapped geodesics, the construction of such
a high-energy commutant is manifestly impossible.

In \cite{Bony-Haefner1}, the estimate \eqref{BHest} is proved by a
multi-step process involving a sequence of perturbation arguments,
starting from flat $\RR^n.$ It is thus a priori unclear whether such
estimates continue to hold if we vary the topology of our space and
its end structure.  In this paper we show that \eqref{BHest} (as well
as a related estimate for $\sqrt{\Delta}$) does indeed continue to hold
on any long-range metric perturbation of a \emph{scattering manifold},
and further holds even if a short range (in a suitable sense) non-negative
potential is added.
The class of scattering manifolds, introduced by Melrose
\cite{RBMSpec}, consists of all manifolds with ends that look
asymptotically like the large ends of cones.  The topology of interior
and of the cross sections of the ends is unrestricted.  Our methods
are nonperturbative and simple, involving only commutator estimates
for differential operators and a sharp Poincar\'e-type inequality on
these manifolds.  We anticipate that these methods will prove quite
flexible in the investigation of energy decay in a variety of other
asymptotic geometries.

We do not explore the applications of our estimate in detail here, as
the methods of \cite{Bony-Haefner1} apply, mutatis mutandis, directly
to our situation.  We content ourselves with restating the energy
decay estimate of \cite{Bony-Haefner1} for solutions to the wave
equation in the final section of the paper and sketching the main
ingredients in its proof, adapted to our setting.  This estimate applies on
scattering manifolds with no trapped geodesics.\footnote{Such a
  manifold must in fact be contractible, but we note that even $\RR^n$
  can be equipped with scattering metrics different from the round
  metric on the sphere at infinity, so this result remains broader
  than that of \cite{Bony-Haefner1}.}

We point out here that Guillarmou and Hassell started an extensive and
very detailed study of the Laplacian on scattering manifolds near the
bottom of the spectrum, \cite{Guillarmou-Hassell:Resolvent-I}, with a
particular emphasis on the Schwartz kernel of the resolvent of the
Laplacian on a resolved space.  Our methods give the estimates we need
more quickly, but naturally the results of
\cite{Guillarmou-Hassell:Resolvent-I} give more detail on the
resolvent kernel, which in principle implies for instance results on
the energy decay\footnote{Note, however, that $L^2$-based estimates
  are not always easy to get from precise description of the Schwartz
  kernel!}.  We also remark that Bouclet \cite{Bouclet1} has recently
proved weighted low-energy estimates generalized those of
\cite{Bony-Haefner1} for \emph{powers} of the resolvent on an asymptotically
Euclidean space.

Our paper is structured as follows. In Section~\ref{sec:b-sc} we recall
the background material concerning b- (or totally characteristic)
and scattering differential operators. In Section~\ref{sec:Poincare}
we obtain Poincar\'e inequalities and in Section~\ref{sec:weights} weighted
diferential estimates that we use in Section~\ref{sec:Mourre}
to prove our positive commutator estimate. Finally, in Section~\ref{sec:wave}
we show how these results can be applied to study energy decay for the
wave equation, following the method of Bony and H\"afner \cite{Bony-Haefner1}.

\section{b- and scattering geometry}\label{sec:b-sc}
We very briefly recall the basic definitions of the b- and scattering
structures on an $n$-dimensional manifolds with boundary, denoted $X$;
we refer to \cite{RBMSpec} for more detail.
A boundary defining function $x$ on $X$ is a non-negative $\CI$ function
on $X$ whose zero set is exactly $\pa X$, and whose differential
does not vanish there. We recall that $\dCI(X)$, which may also be
called the set of Schwartz functions, is the subset of
$\CI(X)$ consisting of functions vanishing at the boundary with
all derivatives, the dual of $\dCI(X)$ is tempered distributional densities
$\dist(X;\Omega X)$; tempered distributions $\dist(X)$ are elements of
the dual of Schwartz densities, $\dCI(X;\Omega X)$.

Let $\Vf(X)$ be the Lie algebra of all $\CI$ vector fields on $X$;
thus $\Vf(X)$ is the set of all $\CI$ sections of $TX$. In local
coordinates $(x,y_1,\ldots,y_{n-1})$,
$$
\pa_x,\pa_{y_1},\ldots,\pa_{y_{n-1}}
$$
form a local basis for $\Vf(X)$, i.e.\ restrictions of elements
of $\Vf(X)$ to the coordinate chart can be expressed uniquely as
a linear combination of these vector fields with $\CI$ coefficients.
We next define $\Vb(X)$ to be the Lie algebra of $\CI$ vector fields tangent
to $\pa X$; in local coordinates 
$$
x\pa_x,\pa_{y_1},\ldots,\pa_{y_{n-1}}
$$
form a local basis in the same sense. Thus, $\Vb(X)$ is the set
of all $\CI$ sections of a bundle, called the b-tangent bundle of
$X$, denoted $\Tb X$. Finally, $\Vsc(X)=x\Vb(X)$ is the Lie algebra
of scattering vector fields;
$$
x^2\pa_x,x\pa_{y_1},\ldots,x\pa_{y_{n-1}}
$$
form a local basis now. Again, $\Vsc(X)$ is the set of 
all $\CI$ sections of a bundle, called the scattering tangent bundle of
$X$, denoted $\Tsc X$.

The dual bundles of $TX,\Tb X,\Tsc X$ are $T^*X,\Tb^*X,\Tsc^*X$ respectively,
with local bases
$$
dx,\ dy_j,\ \text{resp.}\ \frac{dx}{x},\ dy_j,\ \text{resp.}\ \frac{dx}{x^2},
\ \frac{dy_j}{x},\ j=1,\ldots,n-1.
$$
These induce form bundles and density bundles as usual. In particular,
local bases of the density bundles are
$$
|dx\,dy_1\ldots dy_{n-1}|\ \text{resp.}\ x^{-1}|dx\,dy_1\ldots dy_{n-1}|,
\ \text{resp.}\ x^{-n-1}|dx\,dy_1\ldots dy_{n-1}|.
$$
If $X$ is compact, the $L^2$-spaces relative to
these classes of densities
are well-defined as Banach spaces, up to equivalence
of norms; they are denoted by $L^2(X)$, $L^2_{\bl}(X)$, $L^2_{\scl}(X)$,
respectively.

The classes of vector fields mentioned induce algebras of differential
operators, consisting of locally finite sums of products of these
vector fields and elements of $\CI(X)$, considered as operators on $\CI(X)$.
These are denoted by $\Diff(X)$, $\Diffb(X)$ and $\Diffsc(X)$, respectively.
These in turn give rise to (integer order) Sobolev spaces. Thus,
for $m\geq 0$
integer,
$$
H_{\bullet}^m(X)=\{u\in L^2_{\bullet}(X):\ Qu\in L^2_{\bullet}(X)
\ \forall Q\in\Diff_{\bullet}^m(X)\},
$$
where $\bullet$ is either $\bl$ or $\scl$ and
where $Qu$ is a priori defined as a (tempered) distribution.


A similar construction leads to symbol classes:
We let $S^k(X),$ the space of symbols of order $k$, consist of
functions $f$ such that
$$
x^k Lf\in L^\infty(X)\ \text{for all}\ L\in\Diffb(X).
$$
We note, in particular, that
$$
x^\rho\CI(X)\subset S^{-\rho}(X)
$$
since $\Diffb(X)\subset\Diff(X)$.
As $\Diffb(X)$ (a priori acting, say, on tempered distributions)
preserves $S^k(X)$, and one can extend $\Diffb(X)$ and $\Diffsc(X)$
by `generalizing the coefficients':
$$
S^k\Diffb^m(X)=\{\sum_j a_j Q_j:\ a_j\in S^k(X),\ Q_j\in\Diffb^m(X)\},
$$
with the sum being locally finite, and defining $S^k\Diffsc^m(X)$
similarly. In particular,
$$
x^k\Diffb^m(X)\subset S^{-k}\Diffb^m(X),
\ x^k\Diffsc^m(X)\subset S^{-k}\Diffsc^m(X).
$$
Then $Q\in S^{k}\Diffsc^m(X)$, $Q'\in S^{k'}\Diffsc^{m'}(X)$ gives
$QQ'\in S^{k+k'}\Diffsc^{m+m'}(X)$, and the analogous statement
for $S^k\Diffb^m(X)$ also holds.

An example of particular interest is the radial, or geodesic,
compactification of $\RR^n$, which compactifies $\RR^n$ as a closed
ball, $X=\overline{\BB^n}$; see \cite[Section~1]{RBMSpec} for an extended
discussion, with the compactification called stereographic
compactification there. In this case, the set of Schwartz functions on $\RR^n$
lifts to $\dCI(X)$ (justifying the `Schwartz' terminology for the latter),
the set of 0th order classical symbols
on $\RR^n$, i.e.\ 0th order symbols $a$ with an asymptotic expansion
$a(r\omega)\sim \sum_{j=0}^\infty r^{-j} a_j(\omega)$ in polar coordinates,
lifts to $\CI(X)$, the translation invariant
vector fields on $\RR^n$ lift to a basis of $\Vsc(X)$, and $\Hsc^m(X)$
is the standard Sobolev space $H^m(\RR^n)$ (under the natural identification
of functions), while $S^k(X)$ is the
standard symbol space $S^k(\RR^n)$. One way of seeing these statements
is to introduce `inverse polar coordinates' $z=x^{-1}\omega$, $x\in(0,1)$,
$\omega\in\sphere^{n-1}$, in the exterior of a closed ball in
$\RR^n_z$, and use polar coordinates $(\rho,\omega)\in(1/2,1)
\times\sphere^{n-1}$
near $\pa\BB^n$, with $\BB^n$ considered as the unit ball in $\RR^n$; then
one suitable identification of the exterior of the ball of radius 2 in
$\RR^n_z$ with the interior
of a collar neighborhood of $\pa\BB^n$ in $\overline{\BB^n}$ is
$$
(0,1/2)\times\sphere^{n-1}\ni(x,\omega)\mapsto(\rho,\omega)=(1-x,\omega)
\in(1/2,1)\times\sphere^{n-1}.
$$

\section{Poincar\'e inequalities}\label{sec:Poincare}
Let $g$ be an scattering metric on a compact manifold
with boundary $X$ of dimension $n$, and $L^2_g(X)$ the metric $L^2$-space.
That is, as introduced by Melrose \cite{RBMSpec},
we assume that $g$ is a Riemannian metric on $X^\circ$, and
that $\pa X$ has a collar neighborhood $U$ and a boundary defining function
$x$ such that on $U$,
$$
g=\frac{dx^2}{x^4}+\frac{h}{x^2},
$$
where $h$ is a symmetric 2-cotensor, $h\in\CI(X;T^*X\otimes T^*X)$,
which restricts to a metric on $\pa X$. Then with $h_0=h|_{\pa X}$, and
also extended to $U$ using the product decomposition,
\begin{equation}\label{eq:asymp-Eucl}
g=\frac{dx^2}{x^4}+\frac{h_0}{x^2}+g_1,\ g_1\in x\CI(X;\Tsc^*X\otimes\Tsc^*X).
\end{equation}
Below we assume that $g$ is of this form, with merely
\begin{equation}\label{eq:weaker-asymp}
g_1\in S^{-\rho}(X;\Tsc^*X\otimes\Tsc^*X),\ \rho>0.
\end{equation}
Then the Laplacian $\Delta_g\in\Diffsc^2(X)$ satisfies
$\Delta_g\in x^2\Diffb^2(X)$, namely $\Delta_g=x^2\Delta_{\bl}$,
$\Delta_{\bl}\in\Diffb^2(X)$. Explicitly, as shown by Melrose
\cite[Proof of Lemma~3]{RBMSpec}, in local
coordinates $(x,y)$ on a collar neighborhood of $\pa X$,
\begin{equation}\label{eq:Delta-b-form}
\Delta_{\bl}=D_x x^2D_x+i(n-1)xD_x+\Delta_0+x^\rho R,\ R\in S^0\Diffb^2(X),
\end{equation}
where $\Delta_0$ is the Laplacian of the boundary metric.
Moreover, the density $|dg|=x^{-n}|dg_{\bl}|$,
where $|dg_{\bl}|$ is a non-degenerate b-density,
so $L^2_g(X)=x^{n/2}L^2_{\bl}(X)$.

We now recall a standard result in the b-calculus on the mapping
properties of $\Delta_g$.
Although we work with $\Delta_g+V$ with $V=0$ to be concise,
$V\in S^{-2-\rho}(X)$ with $V\geq 0$, $\rho>0$, can easily be accommodated.
We will in fact not use this result in the sequel,
but remark that its use eliminates the need for some of our
arguments (at the expense of b-machinery)
in sufficiently high dimension ($n\geq 5$)
\footnote{A proof of Lemma~\ref{lemma:Vb-est} proceeds as follows.
The statement of this lemma with `isomorphism'
replaced by `Fredholm of index 0' follows from
\cite[Lemma~2.1]{Guillarmou-Hassell:Resolvent-I} (which in turn essentially
quotes \cite{Melrose:Atiyah}), since, keeping in mind
that $\Delta=x^{n/2+1}P_b x^{-n/2+1}$ with the notation of that paper,
$P_b:x^{-1}H^2_\bl(X)\to xL^2_{\bl}(X)$ is shown to be Fredholm of index 0
there. By Lemma~2.2 of \cite{Guillarmou-Hassell:Resolvent-I} elements
of the nullspace of $\Delta$ would necessarily be in
$x^{n/2-1} H_{\bl}^\infty(X)$.
One deduces that $du\in L^2_\scl(X;\Tsc^*X)$, and a regularization argument
allows one to conclude from $\Delta u=0$ that $du=0$, and then that $u=0$.}.

\begin{lemma}\label{lemma:Vb-est}
Suppose $n\geq 5$. Then
$$
\Delta_g:x^{n/2-2}H_\bl^2(X)\to x^{n/2}L^2_\bl(X)=L^2_g(X)
$$
is an isomorphism.
In particular, for any $Q\in\Vb(X)$,
$$
\|x^2 u\|_{L^2_g(X)}+\|x^2Q u\|_{L^2_g(X)}\leq C\|\Delta_g u\|_{L^2(X)}.
$$
\end{lemma}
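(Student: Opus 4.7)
The plan is to exploit the factorization $\Delta_g=x^2\Delta_\bl$ from \eqref{eq:Delta-b-form} and prove the equivalent statement that $\Delta_\bl:x^{n/2-2}H_\bl^2(X)\to x^{n/2-2}L^2_\bl(X)$ is an isomorphism, since multiplication by $x^2$ identifies the target with $x^{n/2}L^2_\bl(X)=L^2_g(X)$. First I would compute the indicial family. Using $D_xx^2D_x=(xD_x)^2-ixD_x$, equation \eqref{eq:Delta-b-form} rearranges to
\begin{equation*}
\Delta_\bl=(xD_x)^2+i(n-2)(xD_x)+\Delta_0+x^\rho R,
\end{equation*}
so the indicial family (obtained by letting $xD_x$ act on $x^s$ as $-is$) is $I(\Delta_\bl,s)=-s^2+(n-2)s+\Delta_0$ on $\pa X$. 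Its indicial roots are $s_\pm^{(j)}=\tfrac12\bigl((n-2)\pm\sqrt{(n-2)^2+4\mu_j}\bigr)$ as $\mu_j\ge 0$ runs over the eigenvalues of $\Delta_0$. For $n\ge5$ the target weight $s_0=n/2-2$ is strictly positive and strictly less than $n-2$, hence strictly inside every interval $(s_-^{(j)},s_+^{(j)})$, and the standard b-calculus Fredholm theorem, as in \cite[Lemma~2.1]{Guillarmou-Hassell:Resolvent-I}, gives that $\Delta_\bl$ at this weight is Fredholm of index zero.

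The core of the proof is triviality of the kernel. If $\Delta_g u=0$ with $u\in x^{n/2-2}H_\bl^2(X)$, then b-elliptic regularity and an indicial-root improvement argument (the next indicial root above $n/2-2$ is $n-2$, so one can improve by any amount less than $n/2$; cf.\ \cite[Lemma~2.2]{Guillarmou-Hassell:Resolvent-I}) upgrades $u$ to $u\in x^{n/2-1}H_\bl^\infty(X)$. Writing $du$ in the scattering cobasis $\{x^{-2}dx,\;x^{-1}dy_j\}$, the components $x^2\pa_xu$ and $x\pa_{y_j}u$ both lie in $x^{n/2}L^2_\bl=L^2_g$, so $du\in L^2_g(X;T^*X)$. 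To extract $du=0$, I pair $0=\Delta_g u$ against $\chi_\epsilon\bar u$ with $\chi_\epsilon(x)=\chi(x/\epsilon)$ a smooth cutoff vanishing for $x<\epsilon$ and equal to $1$ for $x>2\epsilon$, and apply Green's identity to obtain
\begin{equation*}
\int\chi_\epsilon|du|_g^2\,dg=-\int\bar u\,g(d\chi_\epsilon,du)\,dg.
\end{equation*}
On the collar $\{\epsilon<x<2\epsilon\}$, the estimates $|d\chi_\epsilon|_g=O(\epsilon)$, $|u|\lesssim x^{n/2-1}$, and $g$-volume $\sim\epsilon^{-n}$ combine via Cauchy--Schwarz to give the first factor $\bigl(\int|u|^2|d\chi_\epsilon|_g^2\,dg\bigr)^{1/2}=O(1)$, so the right-hand side is bounded by a constant multiple of $\bigl(\int_{\epsilon<x<2\epsilon}|du|_g^2\,dg\bigr)^{1/2}$, which vanishes as $\epsilon\downarrow0$ by dominated convergence. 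Hence $du=0$, and since $u$ vanishes at $\pa X$ in a weighted sense, $u=0$ on each connected component of $X^\circ$.

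Combining Fredholm of index zero with triviality of the kernel gives the claimed isomorphism, and the bounded inverse yields $\|u\|_{x^{n/2-2}H_\bl^2(X)}\le C\|\Delta_g u\|_{L^2_g(X)}$. Translating through the identity $x^{n/2-2}L^2_\bl=x^{-2}L^2_g$ produces
\begin{equation*}
\|x^2u\|_{L^2_g(X)}+\|x^2Qu\|_{L^2_g(X)}\le C\|\Delta_g u\|_{L^2_g(X)}
\end{equation*}
for every $Q\in\Vb(X)\subset\Diffb^1(X)$, which is the stated estimate (reading $L^2(X)$ in the lemma as $L^2_g(X)$, the natural norm here). The main obstacle is the kernel-triviality step: the threshold decay $u\in x^{n/2-1}H_\bl^\infty$ just barely places $du$ in $L^2_g$ with no spare weight, so the cutoff error must be handled delicately by balancing the conormal bound on $u$ against the volume growth of $(X,g)$ at the collar.
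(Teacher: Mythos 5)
Your proof is correct and follows essentially the same route as the paper's footnote sketch: Fredholm of index zero via Lemma~2.1 of Guillarmou--Hassell, regularity of the nullspace up to $x^{n/2-1}H_\bl^\infty(X)$ via their Lemma~2.2, the observation that this places $du$ in $L^2_g$, and a cutoff argument to conclude $du=0$ and hence $u=0$. The explicit balancing of the $O(\epsilon)$ size of $|d\chi_\epsilon|_g$, the $x^{n/2-1}$ decay of $u$, and the $\epsilon^{-n}$ volume growth to get an $O(1)$ bound on the boundary term is a correct fleshing-out of the ``regularization argument'' the paper leaves to the reader.
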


It is also useful to have the Poincar\'e inequality at our disposal.
This can be proved by b-techniques; we give an elementary proof.

\begin{lemma}\label{lemma:Poincare}
Suppose $l>1$, $l>l'$. Then for $u\in x^{l+1} H^1_{\bl}(X)$,
$$
\|x u\|_{x^{l'} L^2_\bl(X)}\leq C\|\nabla_g u\|_{x^l L^2_{\bl}(X)}.
$$
In particular, for $n\geq 3$, with $l=n/2$, $\ep=l-l'>0$,
$$
\|x^{1+\ep} u\|_{L^2_g(X)}\leq C\|\nabla_g u\|_{L^2_g(X)}.
$$
\end{lemma}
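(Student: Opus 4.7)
The plan is to reduce the estimate to a weighted one-dimensional Hardy inequality in the boundary-defining function $x$, exploiting the decay of $u\in x^{l+1}H^1_\bl(X)$; the interior will then be closed by a standard trace-Poincar\'e inequality on a compact domain. By density of $\dCI(X)$ in $x^{l+1}H^1_\bl(X)$, I may assume $u\in\dCI(X)$, so $u$ vanishes pointwise at $\pa X$. In a collar neighborhood $\{x<\delta_0\}\simeq[0,\delta_0)_x\times(\pa X)_y$, the fundamental theorem of calculus gives $u(x,y)=\int_0^x\pa_s u(s,y)\,ds$, and Cauchy--Schwarz with weight $s^{2l-3}$ --- integrable at $0$ precisely because $l>1$ --- yields the pointwise bound
$$
|u(x,y)|^2\leq\frac{x^{2l-2}}{2(l-1)}\int_0^x s^{3-2l}\,|\pa_s u(s,y)|^2\,ds.
$$

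Multiplying by the LHS weight $x^{1-2l'}$ --- which is what $\|xu\|_{x^{l'}L^2_\bl}^2$ reduces to in the collar, the b-density contributing an extra factor $x^{-1}$ --- integrating over $\{x<\delta_0\}$, and swapping the order of integration gives
$$
\int_{x<\delta_0}|u|^2 x^{1-2l'}\,dx\,dy\leq\frac{1}{2(l-1)}\int_0^{\delta_0}\Big(\int_s^{\delta_0}x^{2l-2l'-1}\,dx\Big)s^{3-2l}\int_{\pa X}|\pa_s u(s,y)|^2\,dy\,ds.
$$
The inner $x$-integral is finite, bounded by $\delta_0^{2(l-l')}/(2(l-l'))$, precisely because $l>l'$. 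Since $|\nabla_g u|_g^2\geq x^4|\pa_x u|^2$ in the scattering metric, the resulting $s$-integrand is at most the integrand of $\|\nabla_g u\|_{x^l L^2_\bl}^2$, so the collar portion of the LHS is bounded by $C\|\nabla_g u\|_{x^l L^2_\bl(X)}^2$.

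For the interior $\Omega=\{x\geq\delta_0\}$, a compact subset of $X^\circ$ on which $x^{1-2l'}$ is bounded, it suffices to bound $\|u\|_{L^2(\Omega)}$ in terms of $\|\nabla_g u\|_{x^l L^2_\bl(X)}$. Specializing the pointwise estimate above to $x=\delta_0$ already yields $\|u|_{\{x=\delta_0\}}\|_{L^2(\pa X)}^2\leq C\|\nabla_g u\|_{x^l L^2_\bl(X)}^2$, and the trace-Poincar\'e inequality $\|v\|_{L^2(\Omega)}^2\leq C(\|\nabla v\|_{L^2(\Omega)}^2+\|v|_{\pa\Omega}\|_{L^2(\pa\Omega)}^2)$ on the compact Lipschitz domain $\Omega$ --- proved by Rellich compactness and contradiction --- then closes the argument. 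The ``in particular'' conclusion follows immediately from the identification $L^2_g(X)=x^{n/2}L^2_\bl(X)$ with $l=n/2$ and $l'=n/2-\ep$. The main obstacle is really the coordinated choice of Cauchy--Schwarz weights, $s^{2l-3}$ and $s^{3-2l}$, so that the LHS weight $x^{1-2l'}$ absorbs against the $|\pa_x u|^2$ part of $|\nabla_g u|_g^2$; this pins down the hypotheses $l>1$ and $l>l'$ exactly.
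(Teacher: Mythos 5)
Your proof is correct and takes essentially the same approach as the paper: the collar estimate comes from the fundamental theorem of calculus plus Cauchy--Schwarz with exactly the paper's weight ($s^{2l-3}=s^{-2k}$ with $k=-l+3/2$, pinning down $l>1$ and $l>l'$ in the same way), and the interior is closed by a standard compactness argument. The only cosmetic difference is that you close the interior with a trace-Poincar\'e inequality on $\{x\geq\delta_0\}$ (using that the collar calculation also supplies the needed trace bound at $x=\delta_0$) instead of the paper's cutoff-plus-$H^1_0$-Poincar\'e reduction; both are routine.
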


\begin{proof}
It suffices to prove this for $u\in\dCI(X)$ as both sides are continuous
on $x^{l+1}H^1_{\bl}(X)$. Moreover, it suffices to show that for such $u$,
$$
\|\chi x u\|_{x^{l'} L^2_\bl(X)}\leq C\|\nabla_g u\|_{x^l L^2_{\bl}(X)},
$$
$\chi\in\CI_c(X)$ supported in a collar neighborhood of $\pa X$, which
is then identified with $[0,x_0)_x\times\pa X$, for the rest will then follow
by the standard Poincar\'e inequality on $H^1_0(K)$ where $K\subset X^\circ$
is compact. This in turn follows from
$$
\|\chi x u\|_{x^{l'} L^2_\bl(X)}\leq C\|x^2D_x u\|_{x^l L^2_{\bl}(X)},
$$
i.e.
\begin{equation}\label{eq:bdy-Poincare}
\int \chi^2 |u|^2 x^{-2l'+1}\,dx\,dy\leq C^2\int |D_x u|^2 x^{-2l+3}\,dx\,dy.
\end{equation}
But in local coordinates near $\pa X$, for $k<1/2$, and for $x\leq x_0$
\begin{equation*}\begin{split}
|u(x,y)|&=\Big|\int_0^x (\pa_x u)(s,y)\,ds\Big|
=\Big|\int_0^x s^{k}(\pa_x u)(s,y) s^{-k}\,ds\Big|\\
&\leq \Big(\int_0^x s^{2k}|(\pa_x u)(s,y)|^2\,ds\Big)^{1/2}
\Big(\int_0^x s^{-2k}\,ds\Big)^{1/2}\\
&\leq \Big(\int_0^{x_0} s^{2k}|(\pa_x u)(s,y)|^2\,ds\Big)^{1/2}
C'x^{-k+1/2},
\end{split}\end{equation*}
and thus, provided $p-2k+1>-1$,
$$
\int_0^{x_0} x^p |u(x,y)|^2\,dx\leq C''
\int_0^{x_0} s^{2k}|(\pa_x u)(s,y)|^2\,ds.
$$
Integration with respect to $y$ now gives
$$
\int \chi^2 |u|^2 x^{p}\,dx\,dy\leq C^2\int |D_x u|^2 x^{2k}\,dx\,dy.
$$
So take $k=-l+3/2$, so $k<1/2$ is satisfied for $l>1$. Then let
$p=-2l'+1$, so $p-2k+1=-2l'+2l-1$, and $p-2k+1>-1$ is satisfied if $l'<l$.

\end{proof}

We now prove a sharp version of the Poincar\'e inequality; this
will follow from a weighted Hardy inequality, which can
be found in the Appendix of \cite{Mazzeo-McOwen}; we give a proof for
completeness:
\begin{lemma}\label{lemma:Hardy}
Let $u \in \dCI_c([0,\infty)),$ and let $d\mu=x^{-n-1} \, dx$ on $(0,\infty).$   If $s<(n-2)/2,$ we have
$$
\norm{x^{1+s} u}^2_{L^2(d\mu)}\leq \frac{4}{(n-2-2s)^2} \norm{x^{2+s}
  \pa_x u}^2_{L^2(d\mu)}.
$$
\end{lemma}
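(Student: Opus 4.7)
The plan is to reduce the claim to a one-dimensional weighted Hardy inequality and prove that inequality by a single integration by parts followed by Cauchy--Schwarz.

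First I would unpack the weighted $L^2$ norms. With $d\mu=x^{-n-1}\,dx$ and $\alpha:=2s+1-n$, the desired estimate is
$$
\int_0^\infty x^{\alpha}|u|^2\,dx\leq \frac{4}{(n-2-2s)^2}\int_0^\infty x^{\alpha+2}|\pa_x u|^2\,dx.
$$
Observe that $\alpha+1=-(n-2-2s)$, so the hypothesis $s<(n-2)/2$ is exactly the statement $\alpha+1<0$, and in particular $\alpha+1\neq 0$; this nonvanishing is what makes the subsequent integration by parts useful.

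Next, I would write $x^{\alpha}=(\alpha+1)^{-1}\pa_x x^{\alpha+1}$ and integrate by parts against $|u|^2$. The boundary term at $x=\infty$ vanishes by compact support, and the one at $x=0$ vanishes because $u\in\dCI_c$ vanishes to infinite order at the origin, which easily absorbs the (possibly strongly negative) power $x^{\alpha+1}$. Using $\pa_x|u|^2=2\re(\bar u\,\pa_x u)$, this yields
$$
\int_0^\infty x^\alpha|u|^2\,dx=-\frac{2}{\alpha+1}\re\int_0^\infty x^{\alpha+1}\bar u\,\pa_x u\,dx.
$$
Splitting $x^{\alpha+1}=x^{\alpha/2}\cdot x^{(\alpha+2)/2}$ and applying Cauchy--Schwarz then gives
$$
\int_0^\infty x^\alpha|u|^2\,dx\leq \frac{2}{|\alpha+1|}\Big(\int_0^\infty x^\alpha|u|^2\,dx\Big)^{1/2}\Big(\int_0^\infty x^{\alpha+2}|\pa_x u|^2\,dx\Big)^{1/2}.
$$
Dividing through by $(\int x^\alpha|u|^2\,dx)^{1/2}$ and squaring produces the sharp constant $4/(\alpha+1)^2=4/(n-2-2s)^2$, as required.

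There is no real obstacle here: this is a classical weighted Hardy argument. The only delicate point is the vanishing of the boundary term at $x=0$ despite the negative exponent $\alpha+1$, and this is precisely what the Schwartz-type decay built into $\dCI_c$ is designed to handle.
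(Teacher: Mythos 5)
Your proposal is correct and follows essentially the same argument as the paper: a single integration by parts in the weighted measure $d\mu$, followed by Cauchy--Schwarz, with the boundary terms killed by the infinite-order vanishing built into $\dCI_c$. The only cosmetic difference is that you obtain the sharp constant by dividing through by $\norm{x^{1+s}u}_{L^2_\mu}$ and squaring, whereas the paper applies Young's inequality with a free parameter $\lambda$ and then optimizes over $\lambda$; these yield the identical bound.
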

\begin{proof}
We follow the usual proof of Hardy's inequality, noting that one
usually uses $r=1/x$ as the independent variable.  We will use the
abbreviated notation $L^2_\mu= L^2 (d\mu).$

As
$$
[x^2\pa_x, x^{1+2s}] = (1+2s) x^{2+2s},
$$
pairing with $u$ yields
\begin{align*}
(1+2s)\norm{x^{1+s}u}_{L^2_\mu}^2 &=\ang{x^2 \pa_x (x^{1+2s}u),u}_{L^2_\mu} - \ang{x^{1+2s}
  x^2 \pa_x u,u}_{L^2_\mu}\\
&= \int_0^\infty \pa_x(x^{1+2s}u) x^{-n+1} \overline{u} \, dx-
\int_0^\infty x^{3+2s} u' \overline{u} x^{-n-1} \, dx
\end{align*}
Integrating by parts yields
$$
(1+2s)\norm{x^{1+s}u}_{L^2_\mu}^2 = (n-1) \int \abs{x^{1+s} u}^2 \, x^{-n-1} \,
dx- 2 \int \Re (u u')  x^{3+2s} \, x^{-n-1} \, dx,
$$
hence if $s<(n-2)/2,$
$$
(n-2-2s) \norm{x^{1+s} u}_{L^2_\mu}^2 \leq 2\big\lvert\ang{x^{1+s} u, x^{2+2s} u'}_{L^2_\mu}\big\rvert \leq
\lambda \norm{x^{1+s} u}_{L^2_\mu}^2 + \frac 1\lambda \norm{x^{2+s} u'}_{L^2_\mu}^2
$$
for all $\lambda>0.$
Thus if we also have $\lambda < n-2-2s,$
$$
\norm{x^{1+s}u}_{L^2_\mu}^2 \leq \frac{\norm{x^{2+s} u'}_{L^2_\mu}^2}{(n-2-2s)\lambda-\lambda^2}
$$
Optimizing by taking
$\lambda=(n-2)/2 -s$ yields the desired estimate.
\end{proof}

Since in a collar neighborhood of $\pa X,$
$$
\abs{\nabla_g u}^2_g \sim \abs{x\pa_\theta u}^2+ \abs{x^2 \pa_x u}^2,
$$
we can combine our Hardy inequality with the non-sharp Poincar\'e inequality
above to get a sharp result:

\begin{prop}\label{prop:sharp}
If $s<(n-2)/2$ and
$$u \in x^{-s+(n-2)/2}H^1_{\bl}(X),$$ then
$$
\norm{x^{1+s} u}^2_{L^2_g(X)}\leq C_s \norm{x^s \nabla u}_{L^2_g(X)}.
$$
In particular, the estimate holds for $u\in x^{-s}H^1_{\scl}(X),$ hence
for $u\in H^1_{\scl}(X)$ for $s\geq 0$.
\end{prop}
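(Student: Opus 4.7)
The plan is to combine the fiberwise Hardy inequality of Lemma~\ref{lemma:Hardy}, applied near $\pa X$, with the non-sharp Poincar\'e inequality of Lemma~\ref{lemma:Poincare} to absorb the compact interior contribution. Both sides of the target inequality are continuous seminorms on $x^{-s+(n-2)/2}H^1_\bl(X)$: via $L^2_g=x^{n/2}L^2_\bl$ and $\Vsc=x\Vb$, this is precisely the weighted space in which $x^{1+s}u$ and $x^s\nabla_g u$ lie in $L^2_g$. By density it therefore suffices to prove the inequality for $u\in\dCI(X)$.

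Fix a cutoff $\chi(x)$ equal to $1$ for $x\leq x_0/2$ and vanishing for $x\geq x_0$, with $x_0$ small enough that $\supp\chi$ lies in the collar $[0,x_0)\times\pa X$, on which $|dg|=x^{-n-1}\sqrt{\det h_0}\,(1+O(x^\rho))\,|dx\,dy|$ and $|\nabla_g u|_g^2\sim|x^2\pa_x u|^2+|x\nabla_{h_0}u|^2$. Apply Lemma~\ref{lemma:Hardy} to $x\mapsto(\chi u)(x,y)$ for each fixed $y\in\pa X$---admissible since $u$ is Schwartz at $x=0$ and $\chi$ has compact $x$-support---and integrate in $y$ against $\sqrt{\det h_0}\,|dy|$ to obtain
$$
\|x^{1+s}\chi u\|_{L^2_g}^2 \leq C_s\|x^{2+s}\pa_x(\chi u)\|_{L^2_g}^2.
$$
Expanding $\pa_x(\chi u)=\chi\pa_x u+\chi'u$ and using $|x^2\pa_x u|\leq C|\nabla_g u|_g$ dominates the $\chi\pa_x u$ piece by $C_s\|x^s\nabla_g u\|_{L^2_g}^2$; the $\chi'$-term and the outer contribution $\|x^{1+s}(1-\chi)u\|_{L^2_g}^2$ are each supported in $\{x\geq x_0/2\}$, where $x$ is bounded below and above, hence are controlled by $C\|u\|_{L^2_g(\{x\geq x_0/2\})}^2$.

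To close the estimate, apply Lemma~\ref{lemma:Poincare} with $l=n/2-s$ (the hypothesis $l>1$ is exactly $s<(n-2)/2$) and any $l'<l$. On $\{x\geq x_0/2\}$ the weight factors are all bounded, so
$$
\|u\|_{L^2_g(\{x\geq x_0/2\})}^2 \leq C\|xu\|_{x^{l'}L^2_\bl}^2 \leq C'\|\nabla_g u\|_{x^l L^2_\bl}^2 = C''\|x^s\nabla_g u\|_{L^2_g}^2,
$$
the final identity (up to equivalent constants) following from $L^2_g=x^{n/2}L^2_\bl$ and $l=n/2-s$. Combining the two bounds and extending by density to $x^{-s+(n-2)/2}H^1_\bl(X)$ gives the main inequality. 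The final sentence of the proposition follows from the inclusion $x^{-s}H^1_\scl(X)\subset x^{-s+(n-2)/2}H^1_\bl(X)$, itself a direct consequence of $L^2_\scl=x^{n/2}L^2_\bl$ and $\Vsc=x\Vb$, together with the trivial $H^1_\scl\subset x^{-s}H^1_\scl$ for $s\geq 0$ on the compact manifold $X$.

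I expect the main technical obstacle to be the bookkeeping needed to confirm that the $\chi'$-localized error and the non-model $(1+O(x^\rho))$ correction to $|dg|$ are harmless, and that the choice $l=n/2-s$ really delivers the weight $x^s$ on $\nabla_g u$ in the Poincar\'e bound without any extraneous loss.
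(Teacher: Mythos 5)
Your proof is correct and follows essentially the same route as the paper's: density to reduce to $\dCI(X)$, a boundary cutoff, the fiberwise Hardy inequality of Lemma~\ref{lemma:Hardy} integrated in the angular variables, and then Lemma~\ref{lemma:Poincare} with $l = n/2 - s$ to absorb the interior and cutoff-derivative error terms. The only cosmetic difference is that you phrase the interior step as boundedness of weights on $\{x \geq x_0/2\}$ while the paper uses $\phi', 1-\phi \lesssim x^{s+\ep}$ and takes $l' = n/2 - s - \ep$; these are the same observation.
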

\begin{proof}
By density of $\dCI(X)$
and continutity of both sides in $x^{-s+(n-2)/2}H^1_{\bl}(X)$ (recall
that $L^2_g(X)=x^{n/2}L^2_{\bl}(X)$), it
suffices to consider $u\in\dCI(X)$ when proving the estimate.

Let $\phi \in \CI(X)$ equal $1$ on a collar neighborhood of $\pa X$ of
the form $\{x<\ep\}$ and equal $0$ on $\{x>2\ep\}.$  By integrating
the inequality Lemma~\ref{lemma:Hardy} in the angular variables,
i.e.\ along $\pa X$ in the collar neighborhood, we have
\begin{align*}
\norm{x^{1+s} \phi u}^2&\lesssim \norm{x^{2+s}
  \pa_x (\phi u)}^2 \\ &\lesssim \norm{x^s \nabla_g (\phi u)}^2\\ &\lesssim
\norm{x^s \nabla u}^2 + \norm{\phi' u}^2.
\end{align*}
(We use the notation $f \lesssim g$ to indicate that there exists $C
>0$ such that $|f| \leq Cg.$)
So overall we obtain
$$
\norm{x^{1+s} u}^2 \lesssim \norm{x^s \nabla u}^2 +
\norm{\phi' u}^2 + \norm{(1-\phi) u}^2.
$$
Now by compact support we certainly have
$$
\phi', (1-\phi) \lesssim x^{s+\ep},
$$
for all $\ep>0,$
hence by Lemma~\ref{lemma:Poincare} (with $l=n/2-s>1,$ $l'=n/2-s-\ep$),
$$
\norm{\phi' u}^2 + \norm{(1-\phi) u}^2 \lesssim \norm{x^s \nabla u}^2,
$$
and the desired estimate follows.
\end{proof}

Interpolating between  $\|x^s u\|_{L^2_g(X)}\leq \|x^s u\|_{L^2_g(X)}$ and
Proposition~\ref{prop:sharp}, we immediately deduce:

\begin{cor}
For $s<(n-2)/2$, $u\in x^{-s}H^1_{\scl}(X)$,
\begin{equation}\label{eq:Poincare-interpolate-gen}
  \|x^{s+\theta} u\|_{L^2_g(X)}\leq
C\|x^s\nabla_g u\|_{L^2_g(X)}^\theta\|x^s u\|_{L^2_g(X)}^{1-\theta},
\ 0\leq\theta\leq 1.
\end{equation}
In particular, if $n\geq 3$, $s=0$, then for $u\in H^1_{\scl}(X)$,
\begin{equation}\label{eq:Poincare-interpolate}
  \|x^{\theta} u\|_{L^2_g(X)}\leq
C\|\nabla_g u\|_{L^2_g(X)}^\theta\|u\|_{L^2_g(X)}^{1-\theta},
\ 0\leq\theta\leq 1.
\end{equation}
\end{cor}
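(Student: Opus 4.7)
The plan is to obtain the inequality by pointwise H\"older interpolation between the trivial endpoint
$$
\|x^s u\|_{L^2_g(X)}\le \|x^s u\|_{L^2_g(X)} \qquad (\theta=0)
$$
and the sharp weighted Poincar\'e estimate
$$
\|x^{1+s} u\|_{L^2_g(X)}\le C_s\|x^s\nabla_g u\|_{L^2_g(X)} \qquad (\theta=1)
$$
supplied by Proposition~\ref{prop:sharp}. Both endpoints are valid under the standing hypotheses $s<(n-2)/2$ and $u\in x^{-s}H^1_{\scl}(X)$, so nothing about the function class needs to be reworked.

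For a fixed $\theta\in(0,1)$, I would factor the integrand pointwise as
$$
|x^{s+\theta}u|^2=\bigl(x^{s+1}|u|\bigr)^{2\theta}\bigl(x^s|u|\bigr)^{2(1-\theta)},
$$
and apply H\"older's inequality on $L^2_g(X)$ with conjugate exponents $1/\theta$ and $1/(1-\theta)$. This yields
$$
\|x^{s+\theta}u\|_{L^2_g(X)}^2\le \|x^{s+1}u\|_{L^2_g(X)}^{2\theta}\,\|x^s u\|_{L^2_g(X)}^{2(1-\theta)}.
$$
Taking square roots and invoking Proposition~\ref{prop:sharp} to bound $\|x^{s+1}u\|_{L^2_g(X)}$ by $C_s\|x^s\nabla_g u\|_{L^2_g(X)}$ gives the first inequality \eqref{eq:Poincare-interpolate-gen}. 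The endpoints $\theta=0,1$ need no separate treatment: they reduce to the trivial inequality and to Proposition~\ref{prop:sharp} themselves.

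For the second displayed estimate \eqref{eq:Poincare-interpolate}, I would simply specialize $s=0$, observing that the hypothesis $s<(n-2)/2$ then becomes $n\ge 3$, and that $x^{-0}H^1_{\scl}(X)=H^1_{\scl}(X)$. I do not anticipate any genuine obstacle here: the only point to verify is compatibility of the function class with Proposition~\ref{prop:sharp}, which is immediate from the corollary's own hypotheses, and the rest is a textbook H\"older interpolation.
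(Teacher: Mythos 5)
Your proof is correct and makes explicit exactly what the paper's one-line remark ``Interpolating between $\|x^s u\|_{L^2_g(X)}\leq \|x^s u\|_{L^2_g(X)}$ and Proposition~\ref{prop:sharp}, we immediately deduce'' leaves implicit. The pointwise factorization $|x^{s+\theta}u|^2=(x^{s+1}|u|)^{2\theta}(x^s|u|)^{2(1-\theta)}$ followed by H\"older with conjugate exponents $1/\theta$ and $1/(1-\theta)$ is precisely the intended interpolation, and the specialization to $s=0$, $n\geq 3$ is as routine as you indicate.
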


Of course, we can estimate $\nabla_g u$ with a right side of similar form:
as $\Delta_g=\nabla_g^*\nabla_g$,
\begin{equation}\label{eq:nabla_g-est}
\|\nabla_g u\|^2_{L^2_g(X)}
=\langle \Delta_g u,u\rangle\leq \|\Delta_g u\|_{L^2_g(X)}
\|u\|_{L^2_g(X)}.
\end{equation}
Note also that if $Q\in x\Vb(X)=\Vsc(X)$ then
\begin{equation}\label{eq:Vsc-est}
\|Qu\|_{L^2_g(X)}\leq C\|\nabla_g u\|_{L^2_g(X)}.
\end{equation}

We can also consider $P=\Delta_g+V$, $V\in S^{-2-\rho}(X)$, $V\geq 0$,
$\rho>0$
as beforehand. Then
\begin{equation}\label{eq:nabla_g-V-est}
\|\nabla_g u\|^2_{L^2_g(X)}
=\langle \Delta_g u,u\rangle\leq
\langle (\Delta_g+V) u,u\rangle \leq \|(\Delta_g+V) u\|_{L^2_g(X)}
\|u\|_{L^2_g(X)}.
\end{equation}

\section{Weighted estimates for $\Delta_g+V$}\label{sec:weights}
We assume
throughout this section that $n\geq 3$, $g$ is a scattering metric
in the sense
of \eqref{eq:asymp-Eucl} with $g_1$ satisfying \eqref{eq:weaker-asymp},
$V\in S^{-2-\rho}(X)$, $V\geq 0$,
$\rho>0$. As below only $L^2_g(X)$ is of interest, we will write
$L^2(X)=L^2_g(X)$ henceforth.

For $0\leq s\leq 1$, $u\in\dCI(X)$, we now compute
\begin{equation}\begin{split}\label{eq:weighted-nabla}
\|x^s\nabla_g u\|^2_{L^2(X)}&=\langle\nabla_g u,x^{2s}\nabla_g u\rangle
=\langle\Delta_g u,x^{2s}u\rangle
+\langle\nabla_g u,[\nabla_g,x^{2s}]u\rangle\\
&=\langle(\Delta_g+V) u,x^{2s}u\rangle-\langle Vu,x^{2s}u\rangle
+\langle\nabla_g u,[\nabla_g,x^{2s}]u\rangle.
\end{split}\end{equation}
Now, for $0\leq s\leq 1/2$,
\begin{equation}\begin{split}\label{eq:weighted-nabla-calc-2}
|\langle(\Delta_g+V) u,x^{2s} u\rangle|&\leq \|(\Delta_g+V) u\|_{L^2(X)}
\|x^{2s} u\|_{L^2(X)}\\
&\leq C\|(\Delta_g+V) u\|_{L^2(X)} \|\nabla_g u\|^{2s}_{L^2(X)}
\|u\|^{1-2s}_{L^2(X)},
\end{split}\end{equation}
where we used \eqref{eq:Poincare-interpolate}.
On the other hand
$$[\nabla_g,x^{2s}]=x^{2s+1}f,\ f\in\CI(X;TX),$$
and $\sup|f|\leq C_0 s$, so using Proposition~\ref{prop:sharp} and $n\geq 3$
\begin{equation*}\begin{split}
|\langle\nabla_g u,[\nabla_g,x^{2s}]u\rangle|
&\leq C_0s\|x^s\nabla_g u\|_{L^2(X)} \|x^{s+1}u\|_{L^2(X)}\\
&\leq C_0 Cs\|x^s\nabla_g u\|_{L^2(X)} \|x^s\nabla_g u\|_{L^2(X)}
=C_0Cs\|x^s\nabla_g u\|_{L^2(X)}^2,
\end{split}\end{equation*}
and for $s$ sufficiently small this can be absorbed into
the left hand side of \eqref{eq:weighted-nabla}. Since
$\langle Vu,x^{2s}u\rangle\geq 0$, we deduce from
\eqref{eq:weighted-nabla} that there exists $s_0>0$ such
that for $0\leq s\leq s_0$,
\begin{equation}\label{eq:weighted-nabla-result}
\|x^s\nabla_g u\|^2_{L^2(X)}
\leq C\|(\Delta_g+V) u\|_{L^2(X)} \|\nabla_g u\|^{2s}_{L^2(X)}
\|u\|^{1-2s}_{L^2(X)};
\end{equation}
indeed this holds even with $\langle Vu,x^{2s}u\rangle$ added
to the left hand side. Although we had assumed
$u\in\dCI(X)$, by density and continuity, the estimate
holds for $u\in H^2_{\scl}(X)$, i.e.\ for $u$ in the domain of
$\Delta_g+V$.
Using the Poincar\'e inequality, Proposition~\ref{prop:sharp}, we
 deduce
\footnote{If $n\geq 5$, one can use Lemma~\ref{lemma:Vb-est} (or its analogue
if $V\geq 0$) to
obtain an estimate that slightly shortens some of the arguments that follow;
one then
needs to rely on the lemma, i.e.\ on b-machinery.
Namely, by Lemma~\ref{lemma:Vb-est}, if $n \geq 5,$
\begin{equation}\label{eq:b-estimates}
\|xQ_i u\|_{L^2(X)}\leq C\|\Delta_g u\|_{L^2(X)},
\ \|x^2 u\|_{L^2(X)}\leq C\|\Delta_g u\|_{L^2(X)}.
\end{equation}
On the other hand,
\begin{equation}\label{eq:sc-estimate}
\|Q_i u\|_{L^2(X)}\leq C\|\nabla_g u\|_{L^2(X)}.
\end{equation}
Interpolating between the first inequality of \eqref{eq:b-estimates}
and \eqref{eq:sc-estimate} gives for $n\geq 5$
\begin{equation}\label{eq:sc-b-interpolate}
\|x^s Q_i u\|_{L^2(X)}\leq C\|\nabla_g u\|^{1-s}_{L^2(X)}
\|\Delta_g u\|^s_{L^2(X)},\ 0\leq s\leq 1.
\end{equation}}:

\begin{prop}\label{prop:weighted-b-estimate-weak}
There exists $s_0>0$ such that for $0\leq s\leq s_0$
\begin{equation}\begin{split}\label{eq:weighted-combined-result-weak}
&\|x^{s+1} u\|_{L^2(X)}+\|x^s\nabla_g u\|_{L^2(X)}\\
&\qquad\leq C_s\|(\Delta_g+V) u\|_{L^2(X)}^{1/2} \|\nabla_g u\|^{s}_{L^2(X)}
\|u\|^{1/2-s}_{L^2(X)},\ u\in H^2_{\scl}(X).
\end{split}\end{equation}
In particular, for $L\in S^{-1-s}\Diffb^1(X)$,
\begin{equation}\label{eq:weighted-b-result-weak}
\|L u\|_{L^2(X)}
\leq C_s\|(\Delta_g+V) u\|_{L^2(X)}^{1/2} \|\nabla_g u\|^{s}_{L^2(X)}
\|u\|^{1/2-s}_{L^2(X)},\ u\in H^2_{\scl}(X).
\end{equation}
\end{prop}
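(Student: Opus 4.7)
The plan is to derive \eqref{eq:weighted-combined-result-weak} directly from the gradient bound \eqref{eq:weighted-nabla-result} just proved, together with the sharp Poincar\'e inequality of Proposition~\ref{prop:sharp}, and then to obtain \eqref{eq:weighted-b-result-weak} from \eqref{eq:weighted-combined-result-weak} by decomposing $L$ into scalar and first-order b-components while exploiting the identity $x\Vb(X)=\Vsc(X)$.

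For \eqref{eq:weighted-combined-result-weak}, taking square roots in \eqref{eq:weighted-nabla-result} already yields the $\|x^s\nabla_g u\|_{L^2(X)}$-half of the estimate. For the $\|x^{s+1}u\|_{L^2(X)}$-half I would apply Proposition~\ref{prop:sharp} in the form $\|x^{1+s}u\|_{L^2(X)}\le C_s\|x^s\nabla_g u\|_{L^2(X)}$, valid provided $s<(n-2)/2$, and then chain with the first half. This forces the final choice of $s_0$ to satisfy simultaneously the smallness already needed in \eqref{eq:weighted-nabla} to absorb the commutator $[\nabla_g,x^{2s}]$ and the hypothesis $s<(n-2)/2$; for $n\ge 3$ both are compatible once $s_0$ is small enough (in particular $s_0<1/2$).

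For \eqref{eq:weighted-b-result-weak}, I would decompose $L$ as a locally finite sum $L=a_0+\sum_k a_k V_k$, with $a_0,a_k\in S^{-1-s}(X)$ and $V_k\in\Vb(X)$; by the very definition of $S^{-1-s}$ one has $|a_0|,|a_k|\le Cx^{1+s}$. The zeroth-order piece is bounded by $\|a_0 u\|_{L^2(X)}\le C\|x^{1+s}u\|_{L^2(X)}$, which is already controlled by \eqref{eq:weighted-combined-result-weak}. For the vector field pieces the key structural observation is $x\Vb(X)=\Vsc(X)$: writing $a_kV_k=(a_kx^{-1})(xV_k)$ with $|a_kx^{-1}|\le Cx^s$ and $W_k:=xV_k\in\Vsc(X)$, one gets $\|a_kV_k u\|_{L^2(X)}\le C\|x^s W_k u\|_{L^2(X)}\le C'\|x^s\nabla_g u\|_{L^2(X)}$, where the last step uses the pointwise bound $|W_k u|_g\le C|\nabla_g u|_g$ (valid because $W_k$ is a smooth combination of a local scattering frame, and this frame controls $|\nabla_g u|_g$ to leading order in the collar and trivially in the interior). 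Invoking \eqref{eq:weighted-combined-result-weak} once more completes the argument.

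There is no real technical obstacle here; the proposition is essentially a repackaging of estimates established immediately before its statement. The only point requiring modest care is the calibration of $s_0$ so that the commutator absorption in \eqref{eq:weighted-nabla} and the Poincar\'e hypothesis $s<(n-2)/2$ hold simultaneously—both are compatible constraints in every dimension $n\ge 3$—and the correct bookkeeping of $x$-weights when converting between b- and scattering vector fields via the structural identity $x\Vb(X)=\Vsc(X)$.
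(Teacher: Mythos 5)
Your proposal is correct and follows essentially the same route as the paper: the paper obtains \eqref{eq:weighted-combined-result-weak} precisely by combining the just-derived gradient bound \eqref{eq:weighted-nabla-result} with the sharp Poincar\'e inequality of Proposition~\ref{prop:sharp}, and the ``in particular'' estimate \eqref{eq:weighted-b-result-weak} is the same bookkeeping you carry out via $S^{-1-s}\Diffb^1(X)\subset x^{1+s}\Diffb^1(X)$ and $x\Vb(X)=\Vsc(X)$ together with \eqref{eq:Vsc-est}. Your remarks on the two constraints fixing $s_0$ (commutator absorption and $s<(n-2)/2$) match the paper's implicit choice.
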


Since any $L\in S^{-2-2s}\Diffb^2(X)$ can be rewritten as
$L=\sum Q_i^*R_i$, $Q_i,R_i\in S^{-1-s}\Diffb^1(X)$, with the sum
finite, we immediately deduce

\begin{cor}\label{cor:weighted-b-pairing-weak}
Let $s_0>0$ be as in Proposition~\ref{prop:weighted-b-estimate}.
For $0\leq s\leq s_0$, $L\in S^{-2-2s}\Diffb^2(X)$,
\begin{equation}\label{eq:weighted-pairing-estimate-weak}
|\langle Lu,u\rangle|\leq
C_s\|(\Delta_g+V) u\|_{L^2(X)} \|\nabla_g u\|^{2s}_{L^2(X)}
\|u\|^{1-2s}_{L^2(X)},\ u\in H^2_{\scl}(X).
\end{equation}
\end{cor}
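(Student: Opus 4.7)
The plan is to reduce the pairing estimate to the $L^2$ estimate already furnished by Proposition~\ref{prop:weighted-b-estimate-weak}. The decomposition $L=\sum_i Q_i^* R_i$ with $Q_i,R_i\in S^{-1-s}\Diffb^1(X)$ (asserted in the lead-in to the corollary) is the input one needs; I will first take this for granted and carry out the main argument, then comment on why the decomposition is available.

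Granting the decomposition, I write
$$
\langle Lu,u\rangle=\sum_i\langle Q_i^* R_i u,u\rangle=\sum_i\langle R_i u, Q_i u\rangle,
$$
which is justified for $u\in H^2_{\scl}(X)\subset H^1_\scl(X)$ since $Q_i,R_i$ are first-order b-operators with symbol weight $-1-s<0$, hence map $H^2_\scl(X)$ into $L^2(X)$. Cauchy--Schwarz then yields
$$
|\langle Lu,u\rangle|\leq\sum_i\|R_i u\|_{L^2(X)}\,\|Q_i u\|_{L^2(X)}.
$$
Now apply the weighted b-estimate \eqref{eq:weighted-b-result-weak} of Proposition~\ref{prop:weighted-b-estimate-weak} to each of $R_i u$ and $Q_i u$, obtaining for each factor
$$
\|R_i u\|_{L^2(X)},\ \|Q_i u\|_{L^2(X)}\leq C_s\|(\Delta_g+V)u\|_{L^2(X)}^{1/2}\|\nabla_g u\|_{L^2(X)}^s\|u\|_{L^2(X)}^{1/2-s}.
$$
Multiplying these two bounds and summing over the finitely many $i$ gives exactly \eqref{eq:weighted-pairing-estimate-weak}, with a new constant absorbing the finite sum.

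The only substantive point is the decomposition $L=\sum Q_i^*R_i$ itself. Using a partition of unity we reduce to a coordinate chart on a collar of $\pa X$, where any $L\in S^{-2-2s}\Diffb^2(X)$ is a finite sum of expressions $aV_1V_2$ with $a\in S^{-2-2s}(X)$ and $V_1,V_2\in\Vb(X)\cup\CI(X)$. Since $x^{-1-s}\in S^{1+s}(X)$, we may factor $a=(ax^{-1-s})\cdot x^{1+s}$ with $ax^{-1-s}\in S^{-1-s}(X)$, and then write $aV_1V_2=(x^{1+s}V_1)^*(ax^{-1-s}V_2)+\text{(lower order)}$, with the correction absorbed into additional terms of the same type; here I use the closure of $S^k\Diffb^m(X)$ under composition and adjoint (modulo elements of the same classes), which was recorded in Section~\ref{sec:b-sc}. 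Each factor $x^{1+s}V_1$ and $ax^{-1-s}V_2$ lies in $S^{-1-s}\Diffb^1(X)$, so the claimed form is achieved.

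The main (minor) obstacle is really just bookkeeping in the decomposition step: one must verify that the adjoint and the lower-order commutator terms remain in $S^{-1-s}\Diffb^1(X)$, which is automatic from the symbolic calculus of b-operators, and that the partition-of-unity cutoffs away from $\pa X$ contribute only compactly-supported terms (which are trivially pairable against $u\in L^2$ using standard Sobolev embedding and the already-established estimates). Once that decomposition is in hand, the rest is Cauchy--Schwarz plus two applications of Proposition~\ref{prop:weighted-b-estimate-weak}.
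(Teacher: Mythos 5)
Your proof is correct and matches the paper's (essentially unstated) argument: the paper asserts the corollary follows ``immediately'' from the decomposition $L=\sum Q_i^*R_i$ with $Q_i,R_i\in S^{-1-s}\Diffb^1(X)$, and you correctly supply the Cauchy--Schwarz step followed by two applications of \eqref{eq:weighted-b-result-weak}. The only cosmetic difference is that you also sketch why the decomposition exists, which the paper takes as read.
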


In fact we can improve upon these results by allowing the
full range $0\leq s<(n-2)/2$
as follows. Rather than working
with $\langle x^{2s}(\Delta_g+V)u,u\rangle$, and rewriting it
in terms of $\|x^s\nabla_g u\|^2_{L^2(X)}$ plus a commutator, we
work with a symmetric expression:
$$
\langle f(\Delta_g+V)u,u\rangle+\langle u,f(\Delta_g+V)u\rangle
$$
for some $f$ which behaves like $x^{2s}$ for small $x$. First we compute
$$
f(\Delta_g+V)+(\Delta_g+V)f=2\nabla_g^* f\nabla_g+((\Delta_g+2V)f),
$$
where the last term on the right hand side is multiplication by
the function $(\Delta_g+2V)f$,
which can be seen by observing that both sides are real self-adjoint
second order scalar differential operators with the same principal symbol, so
their difference is first order, hence by reality and self-adjointness
zeroth order, and it vanishes on the constant function
$1$. Now if $f\geq 0$ then $Vf\geq 0$,
so all terms on the right hand side are positive provided $\Delta_g f\geq 0$,
and we have
\begin{equation*}\begin{split}
&2\langle f\nabla_g u,\nabla_g u\rangle +\langle ((\Delta_g f)u,u\rangle
+\langle 2Vf u,u\rangle\\
&\qquad=\langle f(\Delta_g+V)u,u\rangle+\langle u,f(\Delta_g+V)u\rangle,
\end{split}\end{equation*}
hence
\begin{equation}\label{eq:real-part-weighted-est}
\|f^{1/2}\nabla_g u\|^2\leq \|(\Delta_g+V)u\|\,\|fu\|.
\end{equation}
It remains to find $f\geq 0$ such that $\Delta_g f\geq 0$; we remind the
reader that this is the {\em positive} Laplacian.

With $t_0>0$ to be fixed, we consider
\begin{equation*}\begin{split}
&\chi(t)=e^{1/(t-t_0)},\ t<t_0,\\
&\chi(t)=0,\ t\geq t_0,
\end{split}\end{equation*}
and define $f$ by
\begin{equation}\label{eq:weight-def}
\begin{aligned}
f(p)&=\g(p)^{2s},\text{where}\\ \g(p)&=\chi(0)-\chi(x(p)/\ep),\ p\in X,
\end{aligned}
\end{equation}
where $\ep>0$. For $\ep>0$ is sufficiently small, $d\chi$ is supported
in such a collar neighborhood of $\pa X$ in which we can take $x$ as one of the
coordinates and $g$ is of the form \eqref{eq:asymp-Eucl} with $g_1$
as in \eqref{eq:weaker-asymp}. Moreover, $\g\geq 0$ (hence $f\geq 0$),
$\g'(0)>0$, and $\pa_x\g(x=0)=0$, hence $f\sim x^{2s}$ for $x$ near $0$.
As usual, we abuse notation and
write $f=f(x)$. Recall that
$$
\Delta_g =x^2\Delta_{\bl},\ \Delta_{\bl}=-(x\pa_x)^2+(n-2)(x\pa_x)+ \Delta_0
+x^\rho R,
\ R\in S^0\Diffb^2(X),
$$
and $R$ annihilates constants.
We then compute, for $x/\ep<t_0$ (since $df=0$ for $x/\ep\geq t_0$),
i.e.\ with $t=x/\ep$ for $0\leq t<t_0$, writing $f(p)=\g(p)^{2s}$, and primes
denoting derivatives in $t$,
\begin{equation*}\begin{split}
&\big(-x^2\pa_x^2+(n-3)x\pa_x\big)f=\big(-t^2\pa_t^2+(n-3)t\pa_t\big)\g^{2s}\\
&\qquad
=2s \g^{2s-2}\Big(-(2s-1)t^2 (\g')^2+(n-3)t\g\g'-t^2\g \g''\Big).
\end{split}\end{equation*}
Now, for $0\leq t<t_0$,
\begin{equation*}\begin{split}
&\g'=(t-t_0)^{-2} e^{1/(t-t_0)}>0,\\
&\g''=(t-t_0)^{-4}\big(-1-2(t-t_0)\big)e^{1/(t-t_0)}.
\end{split}\end{equation*}
We deduce that for $t_0<1/2$, $\g''< 0$ (on $[0,t_0)$).
Thus, for $n\geq 3$, $0<s<1/2$,
\begin{equation*}\begin{split}
&\big(-(x\pa_x)^2+(n-2)x\pa_x+\Delta_0\big)f\\
&\qquad=2s \g^{2s-2}\Big(-(2s-1)t^2 (\g')^2+(n-3)t\g\g'-t^2\g \g''\Big)\geq 0,
\end{split}\end{equation*}
i.e.\ the `model Laplacian' of $f$ is always non-negative provided $s\leq 1/2.$

If $n=3$, we have obtained non-negativity of $\Delta_g f$ for
the whole range $0<s<(n-2)/2$.  In general, however if $n>3$ and
$s\geq 1/2$, we need to estimate $t\g'$ relative to $\g$.  An estimate
$t\g'\leq C\g$ is automatic for sufficiently large $C>0$, as it is
easily checked at $0$, and $\g$ is bounded away from $0$ elsewhere.
However, we need a sharp constant, so we proceed as follows.  A
straightforward calculation gives
$$
t\g'-\g=\Big(\frac{t}{(t-t_0)^2}+1\Big)e^{1/(t-t_0)}-e^{-1/t_0},
$$
so $t\g'-\g$ vanishes at $t=0$ and it is decreasing, as its derivative is
$$
\frac{t}{(t-t_0)^4}\big(-1-2(t-t_0)\big)e^{1/(t-t_0)}\leq 0,\ 0\leq t<t_0,
\ t_0<1/2,
$$
so $t\g'\leq \g$ on $[0,t_0)$.
In summary
\begin{equation*}\begin{split}
\big(-t^2\pa_t^2+(n-3)t\pa_t\big)\g^{2s}
=2s \g^{2s-2}\Big((n-2s-2)t\g'\g-t^2\g\g''\Big)\geq 0,
\end{split}\end{equation*}
provided $1/2\leq s<(n-2)/2$, so
$$
\Big(-(x\pa_x)^2+(n-2)(x\pa_x)+ \Delta_0\Big)f\geq 0
$$
in this case.

We deduce that for any $0<s<(n-2)/2$ we have
$$
\Big(-(x\pa_x)^2+(n-2)(x\pa_x)+ \Delta_0\Big)f\geq 0,
$$
provided that we choose $0<t_0<1/2$, and indeed we have the
somewhat stronger estimate (useful for error terms below) that
for $c>0$ sufficiently small,
\begin{equation}\label{eq:positive-Delta-lb}
\Big(-(x\pa_x)^2+(n-2)(x\pa_x)+ \Delta_0\Big)f\geq
c\,\g^{2s-2}\Big(t^2(\g')^2-t^2\g\g''\Big),
\end{equation}
where both summands on the right hand side are non-negative, and where
we used $t\g'\leq \g$ in the case $s\geq 1/2$.
Note that this estimate is valid for any choice of $\ep>0$
provided it is sufficiently small (i.e.\ $\ep\leq\ep_1$, $\ep_1$ suitably
chosen) so that $df$ is supported in
the collar neighborhood of $\pa X$.
We can also deal with the error term $x^\rho R$ by letting $\ep\to 0$.
Namely, on the support of $Rf$, $x\leq\ep$, so $x^\rho R f\leq\ep^\rho Rf$,
so $\Delta_g f\geq 0$ follows provided
\begin{equation}\label{eq:Rf-needed-est}
Rf\leq C\Big(-(x\pa_x)^2+(n-2)(x\pa_x)\Big)f
\end{equation}
for some $C>0$.
But writing out $Rf$ explicitly in terms of $x\pa_x$ and $\pa_{y_j}$
in local coordinates (of which the latter annihilate $f$),
using that $R$ annihilates constants, we conclude that for
$C'>0$ sufficiently large
$Rf$ is bounded by
$$
C' \g^{2s-2}(t^2(\g')^2+t\g\g'-t^2 \g\g''),
$$
where we note that all terms in the parantheses are non-negative and
$C'$ is independent of $\ep\in (0,\ep_1]$. We now note that sufficiently
close to $0$, $t\g\g'$ can be absorbed into $t^2(\g')^2$ (uniformly in $\ep$)
for both are quadratic in $t$, and the latter is non-degenerate, while
outside any neighborhood of $0$, $t\g\g'$ can be absorbed in $-t^2\g\g''$,
i.e.\ $\g'$ can be absorbed into $\g''$, as is easy to check.
Thus, for $C''>0$ sufficiently large, $Rf$ is bounded by
$$
C'' \g^{2s-2}(t^2(\g')^2-t^2 \g\g''),
$$
and this is bounded by
$C'''\Big(-(x\pa_x)^2+(n-2)(x\pa_x)+\Delta_0\Big)f$ for sufficiently large
$C'''>0$ by \eqref{eq:positive-Delta-lb}, i.e.\ \eqref{eq:Rf-needed-est}
holds. This proves that for
$\ep>0$ sufficiently small $\Delta_g f\geq 0$. In summary we have proved:

\begin{lemma}\label{lemma:positive-Laplacian}
Let $0<t_0<1/2$, $0<s<(n-2)/2$.
Then there exists $\ep_0>0$ such that for $0<\ep<\ep_0$,
with $f$ as in \eqref{eq:weight-def}, $\Delta_g f\geq 0$.
\end{lemma}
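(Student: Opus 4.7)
The plan is to reduce the inequality $\Delta_g f\geq 0$ to an explicit one-variable computation on the model b-Laplacian, and then show that the error coming from the curvature/lower order term $x^\rho R$ in \eqref{eq:Delta-b-form} can be absorbed by choosing $\ep$ small.

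First, I would use the decomposition $\Delta_g=x^2\Delta_\bl$ with
\[
\Delta_\bl = -(x\pa_x)^2+(n-2)(x\pa_x)+\Delta_0+x^\rho R,
\]
noting that $f$ is a function of $x$ alone in the collar neighborhood where $df$ is supported, so $\Delta_0 f=0$ there. Setting $t=x/\ep$ and writing $f=\g^{2s}$, the model piece becomes
\[
\bigl(-t^2\pa_t^2+(n-3)t\pa_t\bigr)\g^{2s}=2s\,\g^{2s-2}\Bigl(-(2s-1)t^2(\g')^2+(n-3)t\g\g'-t^2\g\g''\Bigr).
\]
Next I would record the sign properties of $\g',\g''$: $\g'>0$ everywhere on $[0,t_0)$, while a direct computation shows $\g''<0$ on $[0,t_0)$ provided $t_0<1/2$. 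These are the key inputs.

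Now I would split into cases on $s$. For $0<s\leq 1/2$, each of the three terms above is non-negative, so non-negativity is immediate; for $1/2\leq s<(n-2)/2$, the term $(n-3)t\g\g'$ combines with $-(2s-1)t^2(\g')^2$ to give $(n-2s-2)t\g'\g-t^2\g\g''$ after collecting, and non-negativity of the $t\g'\g$ coefficient holds because $s<(n-2)/2$. To retain the comparison with $(n-3)t\g\g'$ in the mixed case I would verify the inequality $t\g'\leq \g$ on $[0,t_0)$, by checking that $h(t):=t\g'-\g$ vanishes at $0$ and has derivative $\tfrac{t}{(t-t_0)^4}(-1-2(t-t_0))e^{1/(t-t_0)}\leq 0$ for $t_0<1/2$. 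Combining these, I obtain not only non-negativity but the stronger lower bound
\[
\bigl(-(x\pa_x)^2+(n-2)(x\pa_x)+\Delta_0\bigr)f\;\geq\; c\,\g^{2s-2}\bigl(t^2(\g')^2-t^2\g\g''\bigr)
\]
for some $c>0$ independent of $\ep$, which will be crucial for the error absorption.

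The main obstacle, and the step where the choice of $\ep$ enters, is handling the remainder $x^\rho R f$ with $R\in S^0\Diffb^2(X)$ annihilating constants. Expanding $Rf$ in terms of $x\pa_x$ and $\pa_{y_j}$ (the latter kill $f$) and using that $R$ annihilates constants (so the output is a multiple of $\g^{2s-2}$ combinations of $t^2(\g')^2$, $t\g\g'$, and $-t^2\g\g''$ with coefficients bounded uniformly in $\ep\in(0,\ep_1]$), I would bound $Rf$ above by $C''\g^{2s-2}(t^2(\g')^2-t^2\g\g'')$. Here the absorption of $t\g\g'$ into the other two non-negative pieces uses that near $0$ all three are quadratic in $t$ with $t^2(\g')^2$ non-degenerate, and away from $0$ the term $-t^2\g\g''$ dominates because $\g''<0$ is bounded away from $0$ on any compact subinterval of $[0,t_0)$. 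Finally, on $\supp(Rf)$ we have $x\leq \ep$, so $x^\rho Rf\leq \ep^\rho C'' \g^{2s-2}(t^2(\g')^2-t^2\g\g'')$, and by the lower bound above this is dominated by the model part once $\ep^\rho$ is small enough, completing the proof.
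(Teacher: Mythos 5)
Your proof follows the paper's argument essentially line by line: the same reduction to the model operator via $t=x/\ep$, the same case split at $s=1/2$ using $\g''<0$ (for $t_0<1/2$) and the sharp bound $t\g'\leq\g$, the same quantitative lower bound $c\,\g^{2s-2}\bigl(t^2(\g')^2-t^2\g\g''\bigr)$, and the same absorption of $x^\rho Rf$ using $x\leq\ep$ on $\supp(Rf)$. One minor imprecision in your absorption step: near $t=0$ the term $-t^2\g\g''$ is actually cubic, not quadratic, since $\g$ vanishes simply at $0$; but this does not affect the argument, since only $t\g\g'$ and the non-degenerate $t^2(\g')^2$ need to be quadratic there, which is what you and the paper both use.
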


As immediate consequences of Lemma~\ref{lemma:positive-Laplacian} and
\eqref{eq:real-part-weighted-est} we deduce that
\begin{equation}\label{eq:real-part-weighted-est-mod}
\|x^s\nabla_g u\|^2\leq C_s\|(\Delta_g+V)u\|\,\|x^{2s}u\|,
\end{equation}
which yields, in view of the Poincar\'e inequality, for $0\leq s<1/2$,
\begin{equation}\label{eq:real-part-weighted-est-Poincare}
\|x^s\nabla_g u\|^2
\leq C'_s \|(\Delta_g+V)u\| \|\nabla_g u\|^{2s}_{L^2(X)}
\|u\|^{1-2s}_{L^2(X)}.
\end{equation}
Using the Poincar\'e inequality again, and applying \eqref{eq:nabla_g-V-est}
we therefore deduce the
following strengthening of Proposition~\ref{prop:weighted-b-estimate-weak}:

\begin{prop}\label{prop:weighted-b-estimate}
For $0\leq s<1/2$
\begin{equation}\begin{split}\label{eq:weighted-combined-result}
&\|x^{s+1} u\|_{L^2(X)}+\|x^s\nabla_g u\|_{L^2(X)}\\
&\qquad\leq C_s\|(\Delta_g+V) u\|_{L^2(X)}^{1/2} \|\nabla_g u\|^{s}_{L^2(X)}
\|u\|^{1/2-s}_{L^2(X)}\\
&\qquad\leq C_s\|(\Delta_g+V) u\|_{L^2(X)}^{(1+s)/2}
\|u\|^{(1-s)/2}_{L^2(X)},\ u\in H^2_{\scl}(X).
\end{split}\end{equation}
In particular, for $L\in S^{-1-s}\Diffb^1(X)$,
\begin{equation}\begin{split}\label{eq:weighted-b-result}
\|L u\|_{L^2(X)}
&\leq C_s\|(\Delta_g+V) u\|_{L^2(X)}^{1/2} \|\nabla_g u\|^{s}_{L^2(X)}
\|u\|^{1/2-s}_{L^2(X)}\\
&\leq C_s\|(\Delta_g+V) u\|_{L^2(X)}^{(1+s)/2}
\|u\|^{(1-s)/2}_{L^2(X)},\ u\in H^2_{\scl}(X).
\end{split}\end{equation}
\end{prop}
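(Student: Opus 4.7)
The plan is essentially assembly: the serious analytic work is already packaged in \eqref{eq:real-part-weighted-est-Poincare}, so the proof reduces to combining it with the sharp Poincar\'e inequality of Proposition~\ref{prop:sharp} and the energy identity \eqref{eq:nabla_g-V-est}. Taking the square root of \eqref{eq:real-part-weighted-est-Poincare} gives directly
$$
\|x^s\nabla_g u\|_{L^2(X)}\leq C_s\|(\Delta_g+V)u\|_{L^2(X)}^{1/2}\|\nabla_g u\|_{L^2(X)}^{s}\|u\|_{L^2(X)}^{1/2-s}.
$$
For the $\|x^{s+1}u\|_{L^2(X)}$ term, I would then apply Proposition~\ref{prop:sharp} (whose hypothesis $s<(n-2)/2$ is automatic from $s<1/2$ and $n\geq 3$) to obtain $\|x^{s+1}u\|\lesssim\|x^s\nabla_g u\|$ and substitute, giving the first inequality of \eqref{eq:weighted-combined-result}.

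To pass from the first inequality of \eqref{eq:weighted-combined-result} to the second, I would use \eqref{eq:nabla_g-V-est} raised to the $s/2$ power, namely $\|\nabla_g u\|^{s}\leq\|(\Delta_g+V)u\|^{s/2}\|u\|^{s/2}$, and substitute; the exponents collapse to $(1+s)/2$ and $(1-s)/2$ as required. The restriction $u\in H^2_\scl(X)$ is inherited from \eqref{eq:real-part-weighted-est-Poincare}, so no separate density argument is needed.

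For the $L\in S^{-1-s}\Diffb^1(X)$ estimate \eqref{eq:weighted-b-result}, I would expand $L$ in local coordinates $(x,y)$ near $\pa X$ as $L=a_0+a_1(x\pa_x)+\sum_j b_j\pa_{y_j}$ with coefficients bounded pointwise by a constant multiple of $x^{s+1}$, and then rewrite $a_1(x\pa_x)=(a_1/x)(x^2\pa_x)$ and $b_j\pa_{y_j}=(b_j/x)(x\pa_{y_j})$ to expose the scattering vector fields $x^2\pa_x,\,x\pa_{y_j}$. Their pointwise $g$-boundedness (inherent in the form \eqref{eq:asymp-Eucl}) then yields
$$
\|Lu\|_{L^2(X)}\lesssim \|x^{s+1}u\|_{L^2(X)}+\|x^s\nabla_g u\|_{L^2(X)}
$$
near $\pa X$; away from $\pa X$ the operator $L$ is smooth and compactly supported, so the bound is routine. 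Applying the first part of the proposition then yields \eqref{eq:weighted-b-result}. The closest thing to an obstacle in the whole argument is this last piece of bookkeeping, which converts the $S^{-1-s}\Diffb^1$ structure into a combination of $\|x^{s+1}u\|$ and $\|x^s\nabla_g u\|$; it requires no new geometric input beyond the identification $\Vsc=x\Vb$.
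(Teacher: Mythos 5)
Your argument is correct and coincides with the paper's: the paper derives the proposition directly from \eqref{eq:real-part-weighted-est-Poincare} together with Proposition~\ref{prop:sharp} (to control $\|x^{s+1}u\|$ by $\|x^s\nabla_g u\|$) and \eqref{eq:nabla_g-V-est} (to pass to the $(1+s)/2$, $(1-s)/2$ exponents), exactly as you do. Your local-coordinate expansion of $L\in S^{-1-s}\Diffb^1(X)$ merely spells out what the paper treats as an immediate consequence of $\Vsc=x\Vb$ and the uniform pointwise bound on scattering vector fields in the metric $g$, so there is no substantive divergence from the paper's route.
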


Using again that any $L\in S^{-2-2s}\Diffb^2(X)$ can be rewritten as
$L=\sum Q_i^*R_i$, $Q_i,R_i\in S^{-1-s}\Diffb^1(X)$, with the sum
finite, we conclude

\begin{cor}\label{cor:weighted-b-pairing}
For $0\leq s<1/2$, $L\in S^{-2-2s}\Diffb^2(X)$,
\begin{equation}\begin{split}\label{eq:weighted-pairing-estimate}
|\langle Lu,u\rangle|&\leq
C_s\|(\Delta_g+V) u\|_{L^2(X)} \|\nabla_g u\|^{2s}_{L^2(X)}
\|u\|^{1-2s}_{L^2(X)}\\
&\leq
C_s\|(\Delta_g+V) u\|_{L^2(X)}^{1+s}\|u\|^{1-s}_{L^2(X)},\ u\in H^2_{\scl}(X).
\end{split}\end{equation}
\end{cor}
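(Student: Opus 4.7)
The plan is to reduce this corollary directly to Proposition~\ref{prop:weighted-b-estimate} via the factorization $L = \sum_i Q_i^* R_i$ alluded to just before the statement. First, I would establish this decomposition: since $\Diffb^2(X)$ is locally generated by products of pairs of elements of $\Vb(X)$ together with multiplication by $\CI(X)$ functions, and since the weight $x^{-2-2s}$ splits evenly as $x^{-1-s}\cdot x^{-1-s}$, any $L\in S^{-2-2s}\Diffb^2(X)$ can, using a partition of unity, be written as a finite sum $\sum_i Q_i^* R_i$ with $Q_i, R_i \in S^{-1-s}\Diffb^1(X)$. (Formal adjoints are taken with respect to $L^2_g(X)$, which preserves the symbol class since the metric density is smooth with respect to the scattering structure.)

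Next, for $u\in H^2_\scl(X)$, I would write
$$
\langle Lu,u\rangle = \sum_i \langle R_i u, Q_i u\rangle,
$$
and apply Cauchy--Schwarz to obtain
$$
|\langle Lu,u\rangle| \leq \sum_i \|R_i u\|_{L^2(X)}\, \|Q_i u\|_{L^2(X)}.
$$
Each factor is now exactly the sort of quantity bounded by \eqref{eq:weighted-b-result} in Proposition~\ref{prop:weighted-b-estimate}; applying that bound to both $\|R_i u\|_{L^2(X)}$ and $\|Q_i u\|_{L^2(X)}$ and multiplying yields
$$
\|R_i u\|_{L^2(X)}\|Q_i u\|_{L^2(X)} \leq C_s\,\|(\Delta_g+V)u\|_{L^2(X)}\,\|\nabla_g u\|^{2s}_{L^2(X)}\,\|u\|^{1-2s}_{L^2(X)},
$$
which gives the first inequality of \eqref{eq:weighted-pairing-estimate} after summing over the finitely many indices.

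For the second inequality, I would then invoke \eqref{eq:nabla_g-V-est}, which gives $\|\nabla_g u\|_{L^2(X)}^2 \leq \|(\Delta_g+V)u\|_{L^2(X)}\,\|u\|_{L^2(X)}$; raising both sides to the $s$-th power and substituting produces the bound $\|(\Delta_g+V)u\|^{1+s}_{L^2(X)}\|u\|^{1-s}_{L^2(X)}$ as claimed. The only nontrivial step is the factorization in the opening paragraph, but this is a routine exercise in the b-symbol calculus already set up in Section~\ref{sec:b-sc}; the analytic content of the corollary sits entirely in the preceding proposition, so I do not expect any real obstacle.
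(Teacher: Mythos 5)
Your proof is correct and follows exactly the paper's route: decompose $L=\sum Q_i^*R_i$ with $Q_i,R_i\in S^{-1-s}\Diffb^1(X)$ (the paper's stated basis for the corollary), apply Cauchy--Schwarz to $\langle Lu,u\rangle=\sum\langle R_iu,Q_iu\rangle$, and bound each factor by \eqref{eq:weighted-b-result}. The second inequality is likewise immediate, either via \eqref{eq:nabla_g-V-est} as you do or by using the second form already present in Proposition~\ref{prop:weighted-b-estimate}.
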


Now, suppose that $u=\psi(H^2(\Delta_g+V))v$, $v\in L^2(X)$,
where $\psi\in L^\infty_c(I)$,
$I\subset (0,\infty)$ compact, $0\leq\psi\leq 1$, $H>0$. Then
$u\in\Hsc^2(X)$ and
$$
C'_I\|u\|_{L^2(X)}\leq\|H^2(\Delta_g+V) u\|_{L^2(X)}\leq C_I\|u\|_{L^2(X)}
$$
and
$$
C'_I\|u\|^2_{L^2(X)}\leq\langle H^2(\Delta_g+V) u,u\rangle\leq C_I\|u\|^2_{L^2(X)}.
$$
Combining these with Corollary~\ref{cor:weighted-b-pairing}
we deduce that
for $L\in S^{-2-\sigma}\Diffb^2(X)$ with $0\leq\sigma<1$,
\begin{equation}\label{eq:localized-weighted-b-est}
|\langle Lu,u\rangle|\leq C' C_I^{1+\sigma/2} H^{-2-\sigma}\|u\|^2_{L^2(X)}.
\end{equation}
Note that $|\langle Vu,u\rangle|$ satisfies the same estimate as
$|\langle Lu,u\rangle|$. If $\sigma>0$ this gives a gain of $H^{-\sigma}$
over e.g.\ $\langle(\Delta+V)u,u\rangle$ as $H\to\infty$; ultimately,
this gain arose due to
the Poincar\'e estimate in \eqref{eq:weighted-nabla-calc-2}.
We also remark that \eqref{eq:weighted-b-result} yields for $L\in S^{-1-s}\Diffb^1(X)$, $0\leq s<1/2$,
\begin{equation}\label{eq:localized-weighted-b-est-2}
\|L u\|_{L^2(X)}
\leq C C_I^{(1+s)/2} H^{-1-s}\|u\|_{L^2(X)}.
\end{equation}
The estimates
\eqref{eq:localized-weighted-b-est}--\eqref{eq:localized-weighted-b-est-2}
are analogues of Lemma~B.12 of \cite{Bony-Haefner1}, with $\lambda=H^2$
in their notation: one can trade powers of $x$ for negative powers of $H$
(within limits),
i.e.\ in the notation of \cite{Bony-Haefner1}, one can trade
negative powers of $\langle x\rangle$ for powers of $\lambda^{-1/2}$
(see the exponent $\gamma$ in \cite{Bony-Haefner1}).

Below we actually need a somewhat stronger result, using the resolvent in
place of the compactly supported functions of $P=\Delta_g+V$.
Thus, for $L\in S^{-1-s}\Diffb^1(X)$, $u\in L^2(X)$, replacing
$u$ by $(\Delta_g+V-z)^{-1}u$, and using
$\|(\Delta_g+V-z)^{-1}\|_{\cL(L^2(X))}\leq |\im z|^{-1}$ (for $\im z\neq 0$),
we deduce that
\begin{equation}\begin{split}\label{eq:res-weight-gain}
\|&L (\Delta+V-z)^{-1}u\|_{L^2(X)}\\
&\leq C\|(\Id+z(\Delta_g+V-z)^{-1})u\|_{L^2(X)}^{(1+s)/2}
\|(\Delta_g+V-z)^{-1}u\|^{(1-s)/2}_{L^2(X)}\\
&\leq C(1+|z|/|\im z|)^{(1+s)/2}\|u\|_{L^2(X)}^{(1+s)/2}
\,|\im z|^{-(1-s)/2}\|u\|^{(1-s)/2}_{L^2(X)}\\
&\leq 2C(|z|/|\im z|)^{(1+s)/2}|\im z|^{-(1-s)/2}\|u\|_{L^2(X)}.
\end{split}\end{equation}
In addition, using the positivity of $\Delta_g+V$, we have for $z$ with
$\re z<0$,
$$
\|(\Delta_g+V-z)^{-1}\|_{\cL(L^2(X))}\leq |z|^{-1},
$$
so
in fact
\begin{equation}\label{eq:res-weight-gain-neg}
\|L (\Delta+V-z)^{-1}u\|_{L^2(X)}\leq 2C|z|^{-(1-s)/2}\|u\|_{L^2(X)},\ \re z<0.
\end{equation}
Replacing $z$ by $z=w/H^2$, we deduce the following:

\begin{prop}
Suppose $L\in S^{-1-s}\Diffb^1(X)$, $0\leq s<1/2$. Then there exists
$C>0$ such that for all $u\in L^2(X)$ we have
\begin{equation}\begin{split}\label{eq:res-weight-gain-H}
&\|L (H^2(\Delta+V)-w)^{-1}u\|_{L^2(X)}\\
&\qquad\qquad\qquad
\leq 2CH^{-1-s}(|w|/|\im w|)^{(1+s)/2}|\im w|^{-(1-s)/2}\|u\|_{L^2(X)},
\ \im w\neq 0,\\
&\|L (H^2(\Delta+V)-w)^{-1}u\|_{L^2(X)}\leq 2CH^{-1-s}
|w|^{-(1-s)/2}\|u\|_{L^2(X)},\ \re w<0.
\end{split}\end{equation}
\end{prop}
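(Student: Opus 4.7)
The proposition is essentially a rescaled version of the resolvent estimates \eqref{eq:res-weight-gain}--\eqref{eq:res-weight-gain-neg} just established, so my plan is to substitute $z=w/H^2$ and carefully track the powers of $H$ that appear.

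The first step is to record the scaling identity
$$
\bigl(H^2(\Delta_g+V)-w\bigr)^{-1}=H^{-2}\bigl(\Delta_g+V-w/H^2\bigr)^{-1},
$$
so that applying $L$ on the left and taking the $L^2(X)$-norm reduces the problem, up to a factor of $H^{-2}$, to the un-rescaled estimate for $\|L(\Delta_g+V-z)^{-1}u\|_{L^2(X)}$ with $z=w/H^2$. This reduces everything to bookkeeping.

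Next I would plug $z=w/H^2$ into \eqref{eq:res-weight-gain}. The ratio $|z|/|\im z|$ is scale-invariant, so $|z|/|\im z|=|w|/|\im w|$ and the $(|z|/|\im z|)^{(1+s)/2}$ factor survives unchanged. On the other hand $|\im z|^{-(1-s)/2}=H^{1-s}|\im w|^{-(1-s)/2}$, contributing a factor $H^{1-s}$. Combining with the prefactor $H^{-2}$ from the scaling identity gives the claimed bound with $H^{-2}\cdot H^{1-s}=H^{-1-s}$. The same substitution in \eqref{eq:res-weight-gain-neg} gives the $\re w<0$ bound: the hypothesis $\re z<0$ is equivalent to $\re w<0$, and $|z|^{-(1-s)/2}=H^{1-s}|w|^{-(1-s)/2}$, which combined with the $H^{-2}$ prefactor again yields $H^{-1-s}$.

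There is no real obstacle here; the content is entirely in the already-proven weighted inequality \eqref{eq:weighted-b-result} of Proposition~\ref{prop:weighted-b-estimate}, which has been applied to $(\Delta_g+V-z)^{-1}u$ using the trivial bounds $\|(\Delta_g+V-z)^{-1}\|\leq |\im z|^{-1}$ on the half-planes and $\leq |z|^{-1}$ for $\re z<0$ (valid since $\Delta_g+V\geq 0$) to derive \eqref{eq:res-weight-gain}--\eqref{eq:res-weight-gain-neg}. The only thing to double-check is the constant: the factor of $2$ comes from bounding $\|(\Id+z(\Delta_g+V-z)^{-1})u\|_{L^2}\leq (1+|z|/|\im z|)\|u\|_{L^2}\leq 2(|z|/|\im z|)\|u\|_{L^2}$ in \eqref{eq:res-weight-gain}, which is preserved under the substitution. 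This gives precisely the two stated inequalities.
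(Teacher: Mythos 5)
Your proof is correct and follows exactly the paper's approach: the paper simply states ``Replacing $z$ by $z=w/H^2$, we deduce the following'' immediately before the proposition, which is precisely the scaling substitution and power-of-$H$ bookkeeping you carry out (using the scale-invariance of $|z|/|\im z|$ and the factor $H^{1-s}$ from $|\im z|^{-(1-s)/2}$, resp.\ $|z|^{-(1-s)/2}$, absorbed against the $H^{-2}$ prefactor).
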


In particular, this gives
uniform bounds on $L(H^2(\Delta+V)-w)^{-1}$ in $\cL(L^2(X))$.

\section{Low frequency Mourre estimate}\label{sec:Mourre}
We now prove the low frequency Mourre estimate.

Let $\phi\in\CI_c(X)$ be chosen as above, i.e.\ let it be identically $1$ near $\pa X$, supported
in a collar neighborhood of $\pa X$, on which $x D_x$ is thus defined,
and let
$$
A=-\frac{1}{2}\big((\phi xD_x)+(\phi xD_x)^*\big).
$$
Since (cf.\ \eqref{eq:Delta-b-form})
$$
\Delta_g=\sum Q_i^* G_{ij} Q_j
=(x^2D_x)^*(x^2 D_x)+x^2 d_{\pa X}^*d_{\pa X}+x^{2+\rho}R,
$$
where
$$
Q_i\in\Vsc(X),\ G_{ij}\in S^0(X),\ R\in S^0\Diffb^2(X),
$$
we have
\begin{equation}\begin{split}\label{eq:comm-calc}
&[\Delta_g+V,A]=-2i(\Delta_g+L),\ L\in S^{-2-\rho}\Diffb^2(X),
\end{split}\end{equation}
at first as a quadratic form on $\dCI(X)$, but then noting that the
right hand side extends (by density) to a continuous map from
$H^2_{\scl}(X)$ to $L^2(X)$.
For $u=\psi(H^2(\Delta_g+V))v$, $v\in L^2(X)$, we
now use Corollary~\ref{cor:weighted-b-pairing}.
Thus, without loss of generality taking $\rho<1,$
\eqref{eq:localized-weighted-b-est} gives
\begin{equation*}
|\langle Lu,u\rangle|\leq C' C_I^{1+\rho/2} H^{-2-\rho}\|u\|^2_{L^2(X)}.
\end{equation*}
Note that $|\langle Vu,u\rangle|$ satisfies the same estimate as
$|\langle Lu,u\rangle|$.

In summary,
\begin{equation*}\begin{split}
&\langle \frac{i}{2}[\Delta_g+V,A]u,u\rangle
=\Big\langle \Big(\Delta_g+V+L-V\Big)u,u\Big\rangle\\
&\qquad\geq \langle(\Delta_g+V) u,u\rangle
-CH^{-2-\rho}\|u\|_{L^2(X)}^2
=H^{-2}\langle (H^2(\Delta_g+V)-CH^{-\rho})u,u\rangle.
\end{split}\end{equation*}
We thus deduce that there exist $H_0>0$ and $C'>0$ such that for $H>H_0$,
\begin{equation*}
\langle \frac{i}{2}[H^2(\Delta_g+V),A]u,u\rangle
\geq C'\|u\|^2,\ u=\psi(H^2(\Delta_g+V))v.
\end{equation*}
Now let $\psi=\chi_I$, the characteristic function of $I$, we deduce the
following:

\begin{thm}
Suppose $n\geq 3$, $g$ is a scattering metric in the sense
of \eqref{eq:asymp-Eucl} with $g_1$ satisfying \eqref{eq:weaker-asymp},
$V\in S^{-2-\rho}(X)$, $\rho>0$, $V\geq 0$, $P=\Delta_g+V$.
Let $I\subset(0,\infty)$ be a compact interval, and $\chi_I$ the
characteristic function of $I$. Then there exist
$H_0>0$ and $C>0$ such that for
$H>H_0$,
\begin{equation*}
\chi_I(H^2 P)\frac{i}{2}[H^2 P,A]\chi_I(H^2 P)
\geq C\chi_I(H^2 P).
\end{equation*}
In particular for $\psi\in\CI((0,\infty))$,
\begin{equation}\label{eq:weight-factor-Mourre}
\psi(H^2 P)\chi_I(H^2 P)\frac{i}{2}[H^2 P,A]
\chi_I(H^2 P)\psi(H^2 P)
\geq C(\inf_I\psi)^2\chi_I(H^2 P).
\end{equation}
\end{thm}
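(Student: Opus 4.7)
The plan is to combine the commutator formula $[\Delta_g+V,A]=-2i(\Delta_g+L)$ with $L\in S^{-2-\rho}\Diffb^2(X)$, which was derived in \eqref{eq:comm-calc}, with spectral localization on $u=\chi_I(H^2 P)v$ and the weighted pairing estimate of Corollary~\ref{cor:weighted-b-pairing} that controls the error terms. Given $v\in L^2(X)$, the function $u=\chi_I(H^2 P)v$ lies in $H^2_{\scl}(X)$ (indeed in the domain of every power of $P$) by functional calculus, since $\chi_I$ has compact support in $(0,\infty)$ and $P$ is positive and elliptic in $\Diffsc^2(X)$.

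Next, I would rewrite
$$
\tfrac{i}{2}[H^2 P,A]=H^2(\Delta_g+L)=H^2 P+H^2(L-V),
$$
and estimate the two summands paired with $u$. Spectral localization gives $\langle H^2 P u,u\rangle\geq(\inf I)\|u\|^2$, together with the two-sided bound $\|Pu\|_{L^2}\leq C_I H^{-2}\|u\|_{L^2}$. After shrinking $\rho$ to lie in $(0,1)$ if necessary and setting $s=\rho/2\in[0,1/2)$, both $L\in S^{-2-2s}\Diffb^2(X)$ and $V\in S^{-2-2s}(X)$ fall under the hypotheses of Corollary~\ref{cor:weighted-b-pairing}, so substituting the spectral bound on $\|Pu\|_{L^2}$ into \eqref{eq:weighted-pairing-estimate} yields $|\langle(L-V)u,u\rangle|\leq CH^{-2-\rho}\|u\|^2$. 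Combining gives
$$
\langle\tfrac{i}{2}[H^2 P,A]u,u\rangle\geq\bigl(\inf I-CH^{-\rho}\bigr)\|u\|^2,
$$
which is bounded below by $C'\|u\|^2$ once $H\geq H_0$ with $CH_0^{-\rho}<\tfrac{1}{2}\inf I$. Since $\chi_I$ is a characteristic function, $\chi_I(H^2 P)$ is an orthogonal projection, so $\|u\|^2=\langle\chi_I(H^2 P)v,v\rangle$; as $v\in L^2(X)$ is arbitrary, this is the desired operator inequality.

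For the "in particular" clause, I would apply the inequality just proved to $v'=\psi(H^2 P)v$: the left-hand side becomes, after moving the self-adjoint operator $\psi(H^2 P)$ across the inner product, exactly the quadratic form in \eqref{eq:weight-factor-Mourre}, while the right-hand side becomes $C\|\chi_I(H^2 P)\psi(H^2 P)v\|^2$. Since $\psi\geq\inf_I\psi$ on $I$, the spectral theorem gives $\chi_I\psi^2\geq(\inf_I\psi)^2\chi_I$ as scalar functions, hence as operators; this bounds the right-hand side below by $C(\inf_I\psi)^2\langle\chi_I(H^2 P)v,v\rangle$.

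The main obstacle, essentially already dispatched by the preparatory material, is showing that the error terms $L$ and $V$ are controlled by $H^{-\rho}$ times the leading $H^2 P$ on spectrally localized states. This hinges precisely on Corollary~\ref{cor:weighted-b-pairing}, whose gain of a power of $H$ ultimately comes from the sharp Poincar\'e inequality Proposition~\ref{prop:sharp} needed to reach weights with $s>0$.
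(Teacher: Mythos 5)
Your proposal is correct and follows essentially the same route as the paper: use the commutator identity \eqref{eq:comm-calc} to write $\tfrac{i}{2}[H^2P,A]=H^2P+H^2(L-V)$, control the error $\langle(L-V)u,u\rangle$ on spectrally localized $u$ via Corollary~\ref{cor:weighted-b-pairing} (with $s=\rho/2$, WLOG $\rho<1$), and absorb it into $\langle H^2Pu,u\rangle\geq(\inf I)\|u\|^2$ for $H$ large. The only cosmetic difference is that you localize directly with $\chi_I$ whereas the paper works with general $\psi\in L^\infty_c(I)$ and specializes at the end; the treatment of the ``in particular'' clause via $v'=\psi(H^2P)v$ and $\chi_I\psi^2\geq(\inf_I\psi)^2\chi_I$ is likewise what the paper intends.
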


\begin{rem}
The commutator is defined here as a quadratic form on $\dCI(X)$, which
extends to $H_{\scl}^2(X)$ continuously.
If $\psi\in\CI_c(I)$ then for $v\in\dCI(X)$ one has $u\in\dCI(X)$ by
the functional calculus in the algebra of scattering pseudodifferential
operators -- the main point here is that the decay properties are preserved,
see \cite[Theorem~11]{Hassell-Vasy:Symbolic}.
(One can also obtain this decay without using the full ps.d.o.\ algebra,
working with the Helffer-Sj\"ostrand formula and commutators directly,
if one so desires.) Thus, for such $v$ and $\psi$, one can expand the
commutator and manipulate it directly, which is important in applications.
\end{rem}

This at once implies the corresponding estimate with $H^2 P$
replaced by $H\sqrt{ P}$, which is the main content of
\cite[Proposition~3.1]{Bony-Haefner1} when $X=\RR^n$ equipped with a
metric asymptotic to the standard Euclidean metric. In order to do
this recall that $\Hsc^{m,l}(X)=x^l\Hsc^m(X)$ is the scattering
Sobolev space of Melrose \cite{RBMSpec}, which for $X$ the radial
compactification of $\RR^n$ is just the standard weighted Sobolev
space $H^{m,l}(\RR^n)$, and one has the high energy estimate that
$(P+\lambda)^{-1}:\Hsc^{m,l}(X)\to\Hsc^{m,l}(X)$, $P=\Delta_g+V$, is bounded by
$C\lambda^{-1}$ in $\lambda>1$ from the semiclassical scattering
calculus; this is of course very easy to see for $l=0$, which is what
we need below.  (Recall that we are using the nonnegative Laplace
operator.)  Now, one has by the functional calculus
$$
\sqrt{P}=\pi^{-1}\int_0^\infty \lambda^{-1/2}
P(P+\lambda)^{-1}\,d\lambda,
$$
so
$$
H\sqrt{P}=\pi^{-1}\int_0^\infty \lambda^{-1/2}
H^2P(H^2P+\lambda)^{-1}\,d\lambda;
$$
using the above observation,
the integral converges for any $m$ as a bounded operator in
$\cL(\Hsc^{m,0}(X),\Hsc^{m-2,0}(X))$.
We now evaluate the commutator $[H\sqrt{P},A]
:\Hsc^{m,1}(X)\to\Hsc^{m-3,-1}(X)$; the integral for the products
$H\sqrt{P}A$ and $A H\sqrt{P}$ converges in this sense.
As
$$
[H^2P(H^2P+\lambda)^{-1},A]
=\lambda (H^2P+\lambda)^{-1}[H^2P,A](H^2P+\lambda)^{-1},
$$
using $(t+\lambda)^{-1}\geq (\sup I+\lambda)^{-1}$ on $I$,
we deduce from \eqref{eq:weight-factor-Mourre} that for $H>H_0$,
\begin{equation}\begin{split}\label{sqrtcommutator}
&\chi_I(H^2P)[H\sqrt{P},A]\chi_I(H^2P)\\
&\quad=\pi^{-1}\int_0^\infty \lambda^{1/2}
(H^2P+\lambda)^{-1}
\chi_I(H^2P)[H^2P,A]\chi_I(H^2P)(H^2P+\lambda)^{-1}
\,d\lambda\\
&\quad\geq \pi^{-1}\int_0^\infty C\lambda^{1/2}(\sup I+\lambda)^{-2}
\chi_I(H^2P)^2\,d\lambda=C'\chi_I(H^2P)^2,\ C'>0,
\end{split}\end{equation}
on $\Hsc^{3,1}(X)$, hence by density of $\Hsc^{3,1}(X)$ and continuity
of both sides on $L^2(X)$, on $L^2(X)$.
Thus, the analogue of the
low energy Mourre estimate of Bony and H\"afner in this more general
setting follows immediately.

\begin{thm}
Suppose $n\geq 3$, $g$ is a scattering metric in the sense
of \eqref{eq:asymp-Eucl} with $g_1$ satisfying \eqref{eq:weaker-asymp},
$V\in S^{-2-\rho}(X)$, $\rho>0$, $V\geq 0$, $P=\Delta_g+V$.
Let $I\subset(0,\infty)$ be a compact interval, and $\chi_I$ the
characteristic function of $I$. Then there exist
$H_0>0$ and $C>0$ such that for
$H>H_0$,
\begin{equation*}
\chi_I(H^2P)\frac{i}{2}[H\sqrt{P},A]\chi_I(H^2P)
\geq C\chi_I(H^2P).
\end{equation*}
\end{thm}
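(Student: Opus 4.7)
The plan is to deduce this from the previous theorem (the Mourre estimate for $H^2 P$) via the functional calculus representation
\[
\sqrt{P}=\pi^{-1}\int_0^\infty \lambda^{-1/2} P(P+\lambda)^{-1}\,d\lambda,
\]
so that, after rescaling,
\[
H\sqrt{P}=\pi^{-1}\int_0^\infty \lambda^{-1/2} H^2P(H^2P+\lambda)^{-1}\,d\lambda.
\]
The first step is to justify that this integral and its formal commutator with $A$ make sense in an appropriate operator topology. Since $(H^2P+\lambda)^{-1}$ is bounded by $C\lambda^{-1}$ on $\Hsc^{m,0}(X)$ for $\lambda>0$ uniformly in $H\geq 1$ by the semiclassical scattering calculus, and since $H^2P(H^2P+\lambda)^{-1}=\Id-\lambda(H^2P+\lambda)^{-1}$ has norm comparable to $\min(1,H^2 P/\lambda)$ in a suitable sense, the integrand is integrable in $\cL(\Hsc^{m,0}(X),\Hsc^{m-2,0}(X))$.

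The second step is to commute with $A$. Using the standard resolvent identity one obtains
\[
[H^2P(H^2P+\lambda)^{-1},A]=\lambda(H^2P+\lambda)^{-1}[H^2P,A](H^2P+\lambda)^{-1},
\]
as a bounded map $\Hsc^{m,1}(X)\to\Hsc^{m-3,-1}(X)$. Sandwiching with $\chi_I(H^2P)$ (which commutes with $(H^2P+\lambda)^{-1}$), the previous theorem yields the operator inequality
\[
\chi_I(H^2P)[H^2P,A]\chi_I(H^2P)\geq 2C\chi_I(H^2P)^2
\]
for $H>H_0$. Inserting this into the integrand, one gets
\[
\chi_I(H^2P)[H\sqrt{P},A]\chi_I(H^2P)\geq \pi^{-1}\int_0^\infty 2C\lambda^{1/2} (H^2P+\lambda)^{-2}\chi_I(H^2P)^2\,d\lambda.
\]

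The third step is to replace $(H^2P+\lambda)^{-2}$ restricted to the range of $\chi_I(H^2P)$ by the scalar bound $(\sup I+\lambda)^{-2}$, using the spectral theorem: on the range, $H^2P$ acts as a multiplication operator with spectrum in $I$, so $(H^2P+\lambda)^{-2}\geq(\sup I+\lambda)^{-2}\Id$. Then
\[
\pi^{-1}\int_0^\infty 2C\lambda^{1/2}(\sup I+\lambda)^{-2}\,d\lambda=C'>0
\]
is a finite positive constant (the integrand decays like $\lambda^{-3/2}$ at infinity and vanishes like $\lambda^{1/2}$ at zero), yielding the desired inequality on $\Hsc^{3,1}(X)$.

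Finally, a density argument closes the proof: both sides of the asserted inequality extend by continuity to $L^2(X)$ once established on the dense subspace $\Hsc^{3,1}(X)$, since the commutator $[H\sqrt{P},A]$ localized by $\chi_I(H^2P)$ on both sides is a bounded operator on $L^2(X)$. The main technical point is ensuring the integral representation converges and can legitimately be differentiated under the integral sign; once that is settled the positivity follows directly from the scalar inequality and the previous theorem.
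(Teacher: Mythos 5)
Your proposal is correct and follows essentially the same route as the paper: the Kato square-root formula for $\sqrt{P}$, the resolvent identity $[H^2P(H^2P+\lambda)^{-1},A]=\lambda(H^2P+\lambda)^{-1}[H^2P,A](H^2P+\lambda)^{-1}$, the spectral bound $(t+\lambda)^{-1}\geq(\sup I+\lambda)^{-1}$ on $I$, and a density argument from $\Hsc^{3,1}(X)$ to $L^2(X)$. The justification of convergence of the integral in $\cL(\Hsc^{m,0},\Hsc^{m-2,0})$ via the semiclassical high-energy resolvent estimate also matches the paper.
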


\section{Energy decay for the wave equation}\label{sec:wave}
If the metric on $X$ is additionally assumed to be non-trapping, we
have a finite- and high-energy Mourre estimate due to Vasy-Zworski
\cite{Vasy-Zworski} (or can re-use the construction employed in
\cite{Bony-Haefner1}).  Putting these ingredients together as in
Theorem~1.3 of \cite{Bony-Haefner1} we obtain by the same means the
analogous energy decay result for solutions to the wave equation; for
brevity, we confine our discussion of these results to the case of
(unperturbed) scattering metrics, i.e.\ those given by
\eqref{eq:asymp-Eucl} near infinity.
\begin{thm}
  Let $(X,g)$ be a scattering manifold having no trapped geodesics, and let $V \in
  S^{-3}(X)$ be a nonnegative potential.  If
$$
\big(D_t^2-(\Delta_g +V)\big)u=0
$$
on $\RR\times X,$ then for all $\ep>0$ and $\mu \in (0,1],$
$$
\norm{x^\mu u'}_{L^2([0,T] \times X)} \lesssim \ang{F_\mu^\ep(T)}^{1/2}
  \norm{u'(0,\cdot)}_{L^2(X)}
$$
where $u'=(\pa_t u, \nabla_g u).$
and
$$
F_\mu^\ep(T)= \begin{cases} T^{1-2\mu-2\ep}, & \mu\leq 1/2 \\1 & \mu>1/2\end{cases}.
$$
\end{thm}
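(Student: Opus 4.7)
The plan is to follow the Bony--H\"afner strategy, combining the low-energy Mourre estimate for $H\sqrt{P}$ just obtained with the high-energy (non-trapping) Mourre estimate from Vasy--Zworski, converting both into weighted resolvent estimates, and then invoking a Kato-smoothing type argument to control the space-time integral of $\|x^\mu u'\|^2$ by the initial energy.

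First I would reduce the problem to a weighted resolvent estimate for $P=\Delta_g+V$. Since $u'=(\pa_t u,\nabla_g u)$ satisfies $\|u'(t,\cdot)\|_{L^2}^2$ conserved, the spectral theorem writes $u'$ as a superposition of $e^{\pm it\sqrt{P}}$ applied to data of finite energy. By Kato's smoothing principle the space-time bound $\int_0^T\|x^\mu u'\|_{L^2}^2\,dt$ is controlled by the supremum over $\lambda>0$ of $\|x^\mu (P-\lambda\pm i0)^{-1}x^\mu\|_{\cL(L^2)}$, weighted by an appropriate spectral cutoff determined by $\mu$ and $T$. The needed resolvent estimates split naturally into three regimes: a bounded range $\lambda\in[c_0,c_1]$, a high-energy regime $\lambda\to\infty$, and the low-energy regime $\lambda\to 0$.

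In each spectral regime I would dyadically localize via $\chi_I(H^2 P)$ with $H^2\sim\lambda^{-1}$ at low frequency (and $H=1$ in the high-energy case), and run the standard Mourre-positive-commutator argument of Jensen--Mourre--Perry: the inequality \eqref{eq:weight-factor-Mourre} applied to $H\sqrt{P}$, together with the bounds \eqref{eq:res-weight-gain-H} on $L(H^2 P-w)^{-1}$ for $L\in S^{-1-s}\Diffb^1(X)$, yields a limiting absorption principle $\|\langle A\rangle^{-1/2-\delta}(H^2 P-w)^{-1}\langle A\rangle^{-1/2-\delta}\|\lesssim 1$ uniformly in $\im w$. Because $A=\tfrac12(\phi xD_x+(\phi xD_x)^*)$ is a b-differential operator of weight $S^0$ supported away from the interior, the weight $\langle A\rangle^{-1/2-\delta}$ is dominated (via Propositions \ref{prop:sharp} and \ref{prop:weighted-b-estimate}) by the scattering weight $x^{1/2+\delta}$, so one rewrites the limiting absorption bound as a weighted scattering estimate. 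At high energies the non-trapping Mourre estimate of Vasy--Zworski supplies the uniform bound $\|x^{1/2+\delta}R(\lambda\pm i0)x^{1/2+\delta}\|\lesssim\lambda^{-1/2}$. At low energies the scaling $\lambda=H^{-2}w$ combined with \eqref{eq:res-weight-gain-H} tracks the $H$-dependence and produces a bound of the form $\|x^{1/2+\delta}R(\lambda\pm i0)x^{1/2+\delta}\|\lesssim \lambda^{-(1-2\delta)/2}$ (with the $\delta$ playing the role of $\epsilon$).

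Having assembled the resolvent bound, I would insert it into the Kato-smoothing formula. For $\mu>1/2$ the weight $x^\mu$ strictly dominates $x^{1/2+\delta}$ near $\pa X$, the $\lambda$-integral is absolutely convergent, and one obtains the uniform-in-$T$ bound $F_\mu^\epsilon(T)=1$. For $\mu\leq 1/2$ the resolvent weight is critical or subcritical; one truncates the spectral integral at frequency $\lambda\sim T^{-1}$ using the time interval $[0,T]$ via Plancherel, estimates the high-frequency piece by the $\mu>1/2$ bound, and controls the low-frequency piece by the growth of the weighted resolvent as $\lambda\to 0$, producing the factor $T^{1-2\mu-2\epsilon}$ upon integration. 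The main obstacle is the last step: carefully tracking the sharp $H$-dependence of the weighted resolvent bound on the imaginary axis (which is what \eqref{eq:res-weight-gain-H} is designed to give, with its loss of $(|w|/|\im w|)^{(1+s)/2}$) and verifying that this loss integrates against the Kato smoothing weight to yield precisely the exponent $1-2\mu-2\epsilon$ rather than something larger. The remainder is a direct transcription of the Bony--H\"afner argument to our geometric setting, using \eqref{sqrtcommutator} in place of its Euclidean analogue.
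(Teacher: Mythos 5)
Your outline points in the right general direction, but it diverges significantly from what the paper actually does, and in a way that matters: you propose to re-derive the full Bony--H\"afner Kato-smoothing/weighted-resolvent machinery (dyadic spectral decomposition, limiting absorption principle, sharp exponent tracking), whereas the paper treats Proposition~3.1 of \cite{Bony-Haefner1} as a black box. That abstract result already packages the entire reduction from the energy-decay statement to a finite list of operator-norm hypotheses, so the paper only has to verify, in the scattering-manifold setting, the four concrete bounds \eqref{ad1}--\eqref{mourre2} on $\A_H=\psi(H^2P)A\psi(H^2P)$: uniform boundedness of $[\A_H,HP^{1/2}]$, of the double commutator $[\A_H,[\A_H,HP^{1/2}]]$, and the weight comparisons $\|\,|\A_H|^\mu x^\mu\|\lesssim H^{-\mu}$, $\|\ang{\A_H}^\mu\psi(H^2P)x^\mu\|\lesssim H^{-\mu}$. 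You acknowledge at the end that the sharp exponent bookkeeping in the Kato-smoothing step is ``the main obstacle,'' but the paper never confronts that obstacle: it is resolved once and for all in the cited abstract result. Your route would require substantially more work and duplicates analysis that is not specific to the geometry.

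There is also a concrete gap in the step where you claim the weight $\ang{A}^{-1/2-\delta}$ is ``dominated via Propositions~\ref{prop:sharp} and \ref{prop:weighted-b-estimate} by the scattering weight $x^{1/2+\delta}$.'' Those propositions bound $x^{s+1}u$ and $x^s\nabla_g u$ in terms of $\|(\Delta_g+V)u\|$; they do not compare the operators $\ang{A}$ and $x^{-1}$ directly, and no such comparison follows immediately. What the paper actually needs, and proves, is Lemma~\ref{lemma:conjugateprojector}: for $L\in x\Diffb^1(X)$ the operators $x^{-1}\psi(H^2P)L$ and $L\psi(H^2P)x^{-1}$ are uniformly $L^2$-bounded as $H\uparrow\infty$. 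The proof is a Helffer--Sj\"ostrand-formula argument that commutes $x^{-1}$ through the resolvent and controls $[P,x^{-1}]\in x\Diffb^1(X)$ via the weighted resolvent estimates \eqref{eq:res-weight-gain-H}. This lemma, not a Poincar\'e inequality, is the mechanism behind \eqref{mourre1}--\eqref{mourre2}, and it is also the key tool in the proof of the double-commutator bound \eqref{ad2} (together with the explicit form $[A,P]=2iP+M$, $M\in S^{-3}\Diffb^2(X)$). Your sketch does not identify the need for this lemma and therefore does not close the argument.

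What the two approaches have in common is the use of \eqref{sqrtcommutator} for converting $[H^2P,A]$ to $[H\sqrt{P},A]$, the reliance on \eqref{eq:res-weight-gain-H} and \eqref{eq:localized-weighted-b-est-2} for the $H$-uniformity, and the appeal to Vasy--Zworski for the non-trapping medium/high-energy Mourre estimate. But the modularity of the paper's argument --- reduce to \eqref{ad1}--\eqref{mourre2}, then verify each with Lemma~\ref{lemma:conjugateprojector} plus the weighted estimates of Section~\ref{sec:weights} --- is precisely what keeps the proof short. If you want a self-contained derivation you would have to re-prove the abstract Mourre/Kato estimate of \cite{Bony-Haefner1} with the correct $H$-dependence, which is a non-trivial undertaking the paper deliberately avoids.
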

\begin{proof}
As indicated above, the relevant medium and high energy estimates are well
known in this setting, and it will suffice, following the strategy of
\cite{Bony-Haefner1}, to demonstrate that the low energy commutant
that we have constructed above satisfies all of the hypotheses of the
Mourre theory.  As discussed in Proposition~3.1 of
\cite{Bony-Haefner1}, it remains for us to verify, in our notation,
the following estimates on the operator
$$
\A_H \equiv \psi(H^2P) A\psi(H^2P):
$$
\begin{align}
\label{ad1} \norm{[\A_H, H P^{1/2}]} &\lesssim 1,\\
\label{ad2}\norm{\big[\A_H,[\A_H,
  H P^{1/2}]\big]} &\lesssim 1,\\
\label{mourre1} \norm{\abs{\A_H}^\mu x^\mu} &\lesssim
H^{-\mu}, \quad \mu \in [0,1]\\
\label{mourre2} \norm{\ang{\A_H}^\mu\psi(H^2P) x^\mu} &\lesssim H^{-\mu},\quad \mu \in [0,1].
\end{align} 

We begin by proving a lemma allowing us to commute powers of $x$ with
spectral projections:
\begin{lemma}\label{lemma:conjugateprojector}
Let $L \in x \Diff^1_b(X).$
The operators
$$
x^{-1} \psi(H^2 P) L
\text{ and } L \psi(H^2 P) x^{-1}
$$
are uniformly $L^2$-bounded as $H \uparrow \infty.$
\end{lemma}
\noindent\emph{Proof of lemma:}
As the two types of operator in question are adjoints of one another, it
suffices to consider the latter.  Moreover, for the desired boundedness
it suffices to estimate $L  [\psi(H^2 P), x^{-1}].$

Letting $\tpsi$ be a compactly supported
almost-analytic extension of $\psi.$  Let $R(z)$ denote the
resolvent
$$
R(z)=(H^2P-z)^{-1}.
$$
We have
\begin{align*}
L [\psi(H^2 P), x^{-1}]&= \frac 1{2\pi} \int_{\CC} \overline{\pa} \tpsi(z)
L [R(z), x^{-1}]\, dz d\overline{z}\\ 
&= -\frac{H^2}{2\pi} \int_{\CC} \overline{\pa} \tpsi(z)
L R(z) [P, x^{-1}]   R(z) \, dz d\overline{z}.
\end{align*}
As $[P,x^{-1}] =Q\in x\Diff_b^1(X)$, \eqref{eq:res-weight-gain-H} gives
(with $s=0$)
that
\begin{equation*}\begin{split}
&\|LR(z)\|_{\cL(L^2(X))}\leq CH^{-1}|\im z|^{-1} |z|^{1/2},\\
&\|QR(z)\|_{\cL(L^2(X))}\leq CH^{-1} |\im z|^{-1} |z|^{1/2},
\end{split}\end{equation*}
so we can estimate the integral by a multiple of
$$
H^2\int_{\CC} \big\lvert \overline{\pa} \tpsi(z) \big \rvert H^{-2} \abs{\Im z}^{-2}|z|\, dz d\overline{z}.
$$
\emph{This concludes the proof of the lemma.}

We now sketch the proofs of \eqref{ad1}--\eqref{mourre2}.  The
estimate \eqref{ad1} follows from \eqref{sqrtcommutator}, as we may
again write
\begin{equation}\begin{split}\label{sqrtcommutator2}
&\psi(H^2P)[H\sqrt{P},A]\psi(H^2P)\\
&\quad=\pi^{-1}\int_0^\infty \lambda^{1/2}
R(\lambda)
\psi(H^2P)[H^2P,A]\psi(H^2P)R(\lambda)
\,d\lambda.
\end{split}\end{equation}
By anti-self-adjointness of the commutator, it suffices to estimate
the norm of 
\begin{equation*}\begin{split}
&\ang{[\A_H, H P^{1/2}] u,u}\\
& = \pi^{-1}\int_0^\infty \lambda^{1/2}
\ang{R(\lambda)
\psi(H^2P)[H^2P,A]\psi(H^2P)R(\lambda)u,u}
\,d\lambda.
\end{split}\end{equation*}
Now as $[H^2 P,A] \in x^2 \Diff^2_b(X),$ we may rewrite this
pairing in the form
$$
H^2 \int_0^\infty \lambda^{1/2}
\ang{x M_1
\psi(H^2P)R(\lambda) u , x M_2
\psi(H^2P)R(\lambda) u}
\,d\lambda
$$
with $M_i \in \Diff^1_b(X).$
Applying \eqref{eq:localized-weighted-b-est-2} (in the `easy' case
$s=0$) yields \eqref{ad1}.

We can now prove \eqref{ad2} in the same manner (cf.\ Remark~3.5 in
\cite{Bony-Haefner1}): By \eqref{eq:comm-calc} we have
$$
[A,P] =2i P +M,
$$
where
$$
M \in S^{-3} \Diff^2_b(X).
$$
Thus,
$$
[\A_H,HP] =2i P +M,
$$
Thus
\begin{equation}\begin{split}\label{sqrtcommutator3}
[\A_H, HP^{1/2}]&=\psi(H^2P)[A,H\sqrt{P}]\psi(H^2P)\\
&=\pi^{-1}\int_0^\infty \lambda^{1/2}
R(\lambda)
\psi(H^2P)[A,H^2P]\psi(H^2P)R(\lambda)\, d\lambda\\
&=\pi^{-1}\int_0^\infty \lambda^{1/2}
R(\lambda)
\psi(H^2P)(2iP+M)\psi(H^2P)R(\lambda)\, d\lambda\\
&=2i\psi(H^2P)^2 H P^{1/2}+
\pi^{-1}\int_0^\infty \lambda^{1/2}
R(\lambda)
\psi(H^2P) M \psi(H^2P)R(\lambda)\, d\lambda\\
&=2i\psi(H^2P)^2 H P^{1/2}+ \B
\end{split}\end{equation}
Hence to estimate
$$
[\A_H,[\A_H, HP^{1/2}]],
$$
by \eqref{ad1}, it suffices to estimate
$$
[\A_H, \B];
$$
to do this, noting that $\A_H$ is self-adjoint, and $\B$ is
anti-self-adjoint, we see that it suffices to obtain boundedness of
$$
\A_H \B = (\A_H x) (x^{-1} \B).
$$
Now application of Lemma~\ref{lemma:conjugateprojector} shows that
$$
\A_H x = (\psi(H^2 P) A x) (x^{-1}\psi(H^2P) x)
$$
is uniformly bounded. Likewise, $x^{-1} \B$ is bounded by similar
considerations: we write the integrand for $x^{-1} \B$ as
\begin{equation*}\begin{split}
&\lambda^{1/2} x^{-1} R(\lambda) \psi(H^2P) M \psi(H^2P)R(\lambda)\\
&\qquad=\sum_j
\lambda^{1/2}\big( x^{-1}
R(\lambda)\psi(H^2P) (xL_{j1})\big)  \big( L_{j2} \psi(H^2P)R(\lambda)\big).
\end{split}\end{equation*}
where $M= \sum_j xL_{j1} L_{j2},$ $L_{ji} \in x\Diff^1_b(X).$
By
\eqref{eq:localized-weighted-b-est-2} the last factor is norm bounded by a
multiple of $H^{-1} (c+\lambda)^{-1},$ $c =\inf \supp \psi>0.$ 
Commuting the factor of $x^{-1}$ across both $R(\lambda)$ and
$\psi(H^2 P)$ yields an operator bounded by a multiple of $H
\lambda^{-1}$ by \eqref{eq:localized-weighted-b-est-2}, while the
commutator terms involved in doing this have the same bound by
Lemma~\ref{lemma:conjugateprojector} and the observation that
$$
[x^{-1}, R(\lambda)] \psi(H^2P) = -R(\lambda) Q R(\lambda) \psi(H^2P)
$$
with $Q \in x \Diff^1_b(X);$ this expression is bounded by a multiple of $H^{-1}
(c+\lambda)^{-1}$ by \eqref{eq:res-weight-gain-H}.  We thus obtain \eqref{ad2}.

To prove \eqref{mourre1}, it suffices by interpolation to prove
uniform boundedness as $H\uparrow \infty$ of
$$
\norm{H\psi(H^2P) A \psi(H^2P) x}_{\cL(L^2(X))};
$$
as above, this follows from Lemma~\ref{lemma:conjugateprojector}.
Likewise, \eqref{mourre2} follows by interpolation with the $\mu=1$ estimate
$$
\norm{H\psi(H^2P) A \psi(H^2P) x}_{\cL(L^2(X))}^2 +\norm{\psi(H^2P) x}_{\cL(L^2(X))}^2.
$$
\end{proof}


\begin{thebibliography}{X}
\bibitem{Bony-Haefner1}
J.-F. Bony and  D. H\"afner, \emph{The semilinear wave equation on
  asymptotically Euclidean manifolds}, preprint, 2008.

\bibitem{Bony-Haefner2}
J.-F. Bony and  D. H\"afner,
\emph{Low frequency resolvent estimates for long range perturbations of the
Euclidean Laplacian}, preprint, 2009.

\bibitem{Bouclet1}
J.-M. Bouclet, \emph{Low energy behaviour of the resolvent of
  long range perturbations of the Laplacian}, preprint, 2009.

\bibitem{Guillarmou-Hassell:Resolvent-I}
C. Guillarmou and A. Hassell,
\emph{Resolvent at low energy and {R}iesz transform for
Schr{\"o}dinger operators on asymptotically conical manifolds. {I}},
Math. Ann. 341 (2008), no. 4, 859--896.

\bibitem{Hassell-Vasy:Symbolic}
A. Hassell and A. Vasy,
\emph{Symbolic functional calculus and {N}-body resolvent estimates},
J. Func. Anal. 173 (2000),257--283.

\bibitem{Mazzeo-McOwen}
R. Mazzeo and R. McOwen,
\emph{Singular Sturm-Liouville theory on manifolds},
J. Differential Equations 176 (2001), no. 2, 387--444. 

\bibitem{Melrose:Atiyah}
R.~B. Melrose,
\emph{The {A}tiyah-{P}atodi-{S}inger index theorem},
A K Peters Ltd., 1993.

\bibitem{RBMSpec}
R.~B. Melrose,
\emph{Spectral and scattering theory for the Laplacian on
asymptotically Euclidian spaces}. In \emph{Spectral and scattering theory},
M.~Ikawa, editor, Marcel Dekker, 1994.

\bibitem{Vasy-Zworski}
A. Vasy and M. Zworski,
\emph{Semiclassical estimates in asymptotically Euclidean scattering,}
Comm. Math. Phys. 212 (2000), no. 1, 205--217. 
\end{thebibliography}
\end{document}